   \newcommand{\Cdb}{\mbox{$\mathbb{C}$}}
   \newcommand{\Ddb}{\mbox{$\mathbb{D}$}}
   \newcommand{\Ndb}{\mbox{$\mathbb{N}$}}
   \newcommand{\Rdb}{\mbox{$\mathbb{R}$}}
   \newcommand{\Zdb}{\mbox{$\mathbb{Z}$}}
   \newcommand{\C}{\mbox{${\mathcal C}$}}
   \renewcommand{\P}{\mbox{${\mathcal P}$}}
   \newcommand{\R}{\mbox{${\mathcal R}$}}
   \newcommand{\norm}[1]{\Vert#1\Vert}
   \newcommand{\bignorm}[1]{\bigl\Vert#1\bigr\Vert}
   \newcommand{\Bignorm}[1]{\Bigl\Vert#1\Bigr\Vert}
   \newcommand{\biggnorm}[1]{\biggl\Vert#1\biggl\Vert}
\newtheorem{theorem}{Theorem}
\newtheorem{lemma}[theorem]{Lemma}
\newtheorem{proposition}[theorem]{Proposition}
\newtheorem{corollary}[theorem]{Corollary}
\newtheorem{definition}[theorem]{Definition}
\theoremstyle{remark}
\newtheorem{remark}[theorem]{Remark}
\begin{document}

\title{The Ritt property of subordinated operators in the group case}

\author[F. Lancien]{Florence Lancien}
\email{florence.lancien@univ-fcomte.fr}
\author[C. Le Merdy]{Christian Le Merdy}
\email{clemerdy@univ-fcomte.fr}
\address{Laboratoire de Mathématiques de Besan\c con, UMR 6623, 
CNRS, Universit\'e Bourgogne Franche-Comt\'e,
25030 Besan\c{c}on Cedex, FRANCE}

\date{\today}

\maketitle

\begin{abstract}
Let $G$ be a locally compact abelian group, let $\nu$ be a regular 
probability measure on $G$, let $X$ be a Banach space, let 
$\pi\colon G\to B(X)$ be a bounded strongly continuous
representation. Consider the average (or subordinated) operator
$S(\pi,\nu) = \int_{G} \pi(t)\,d\nu(t)\,\colon X\to X$.
We show that if $X$ is a UMD Banach lattice and $\nu$ has bounded angular 
ratio, then $S(\pi,\nu)$ is a Ritt operator with a bounded $H^\infty$ functional calculus.
Next we show that if $\nu$ is the square of a symmetric probability 
measure and $X$ is $K$-convex, then $S(\pi,\nu)$ is a Ritt operator. We further show
that this assertion is false on any non $K$-convex space $X$.
\end{abstract}

\vskip 1cm
\noindent
{\it 2000 Mathematics Subject Classification : 47A60, 47A80}

\section{Introduction}\label{Intro}
Let $G$ be a locally compact abelian group and let $M(G)$ denote the
Banach algebra of all bounded regular Borel measures on $G$. Let $X$ be
a complex Banach space and let $B(X)$ denote the Banach algebra of all
bounded operators on $X$. Let $\pi\colon G\to B(X)$ be a representation, that is,
$\pi(t+ s)=\pi(t)\pi(s)$ for any $t,s$ in $G$, and $\pi(e)=I_X$ (where $e$ and $I_X$
denote the unit of $G$ and the identity operator on $X$, respectively). Assume further 
that $\pi$ is bounded, that is $\sup_{t\in G}\norm{\pi(t)}\,<\infty\,$, and that $\pi$
is strongly continuous, that is, for any $x\in X$, the mapping 
$t\mapsto\pi(t)x$ is continuous from $G$ into $X$. To any probability measure 
$\nu\in M(G)$, one can associate the average operator
\begin{equation}\label{Def-Average}
S(\pi,\nu) = \int_{G} \pi(t)\,d\nu(t)\,\in B(X),
\end{equation}
where the integral is defined in the strong sense. 

In this paper we are interested in the following two questions.
\begin{itemize}
\item [Q.1] When is $S(\pi,\nu)$ a Ritt operator?
\smallskip
\item [Q.2] When does $S(\pi,\nu)$ admit a bounded $H^\infty$ functional calculus? 
\end{itemize}
Background on Ritt operators and their $H^\infty$ functional calculus 
(along with references) will be given in Section 
\ref{Ritt} below.

Average operators appear in various contexts, notably 
in ergodic theory. When $X$ is a function space, 
the behaviour of the norm of the powers of $S(\pi,\nu)$,
the almost everywhere convergence of these powers and various
maximal and oscillation inequalities were widely studied, see \cite{Der, JR, JRT, LW} 
and the references therein. Recent papers on these topics \cite{CCL, Cun, LX1, LX2} show 
the importance of the Ritt property and $H^\infty$ functional calculus 
in the behaviour of the powers of operators $S(\pi,\nu)$. 
This is the source of motivation for this paper.

Let $U\in B(X)$ be an invertible operator such that 
$\sup_{k\in\footnotesize{\Zdb}}\norm{U^k}\,<\infty$. Then the mapping
$\pi\colon\Zdb\to B(X)$ defined by $\pi(k)=U^k$ for any $k\in\Zdb$
is a representation and any bounded representation of $\Zdb$
on $X$ has this form. A probability measure on $\Zdb$ is given by
a sequence $\nu=(c_k)_{k\in\footnotesize{\Zdb}}$ of nonnegative real
numbers such that $\sum_k c_k =1$. In this case, we have
$$
S(\nu,\pi) = \sum_{k=-\infty}^{\infty} c_k U^k.
$$
Thus $S(\nu,\pi)$ is subordinated to $U$ in the sense of \cite{Dun}.
The special case $G=\Zdb$ therefore indicates that average operators 
(\ref{Def-Average}) may be regarded as subordinated operators 
in the context of group representations. Subordination operators
induced by probability measures on the semigroup $\Ndb$ were
extensively studied recently \cite{BGT, Dun, GT1, GT2}. In this
context a major question is to determine when $\sum_{n=0}^\infty
c_k T^k\,$ is a Ritt operator for a nonnegative sequence 
$(c_k)_{k\geq 0}$ with $\sum_k c_k =1$ and a power bounded $T\colon X\to X$.
Question Q.1 in the present paper should be considered as
its analogue in the group case.

Our results in Sections 3-5 emphasize the role of Banach space
geometry in these issues. In Section \ref{UMD} we recall the so-called 
bounded angular ratio (BAR) condition and extend some of the results
in \cite{CCL}. We show that if $\nu$ 
has BAR and $X$ is a UMD Banach lattice, then 
$S(\pi,\nu)$ is a Ritt operator and it admits a bounded $H^\infty$ 
functional calculus for any $\pi$ as above. 

Section \ref{K} deals with the case when $X$ is a $K$-convex Banach space
and $\nu$ is the square of a symmetric
probability measure. In this case we show (see Theorem \ref{main})
that for any bounded strongly continuous
representation $\pi\colon G\to B(X)$, $S(\pi,\nu)$ is a Ritt operator.
In the case when $\nu=\lambda_p^X$ is the regular representation on $L^p(G;X)$
(see (\ref{Trans}) for the definition),
this result is a discrete analogue of Pisier's 
Theorem \cite{Pis} showing the analyticity of the 
tensor extension of any convolution semigroup associated with a family
of symmetric probability measures.

Section 5 provides examples of pairs $(\pi,\nu)$ for which 
Q.1 (and hence Q.2) has a negative answer. These examples further
show that the $K$-convexity assumption is unavoidable in Theorem \ref{main}. 

\bigskip
We conclude this introduction with a few notations and conventions.
Unless otherwise specified, $G$ denotes an arbitrary locally compact
abelian group, equipped with a fixed Haar measure $dt$. 
For any $1\leq p \leq\infty$, we let $L^p(G)$ 
denote the $L^p$-space associated to this measure. We let 
$\widehat{G}$ denote the dual group of $G$ and, for any
$\nu\in M(G)$, we let $\widehat{\nu}\colon\widehat{G}\to\Cdb\,$ denote
the Fourier transform of $\nu$. For any 
measurable subset $V\subset G$, we let $\vert V\vert$ and 
$\chi_V\colon G\to \Rdb$ denote the Haar
measure of $V$ and the characteristic function of $V$, respectively.

For any $t\in G$, let $\lambda_p(t)\colon L^p(G)\to L^p(G)$ be the 
translation operator defined
by $[\lambda_p(t)f](s)= f(s-t)$ for any $f\in L^p(G)$. We say that an
operator $T\colon L^p(G)\to L^p(G)$ is a Fourier multiplier
if $T\lambda_p(t) = \lambda_p(t) T$ for any $t\in G$.

Let $(\Omega,\mu)$ be a measure space.
For any $1\leq p\leq \infty$ and for any Banach space $X$, we let $L^p(\Omega;X)$ 
be the Bochner space of measurable functions
$f\colon \Omega\to X$ (defined up to almost everywhere zero functions)
such that the norm function $\norm{f(\cdotp)}$
belongs to $L^p(\Omega)$ (see e.g. \cite[Chapter II]{DU}).
For $p\not=\infty$, the algebraic tensor product 
$L^p(\Omega)\otimes X$ is dense in
$L^p(\Omega;X)$.

Let $1\leq p<\infty$, let $T\colon L^p(\Omega_1)\to L^p(\Omega_2)$
and let $S\colon X\to X$ be bounded operators. If $T\otimes S\colon 
L^p(\Omega_1)\otimes X\to L^p(\Omega_2)\otimes X$ extends to a bounded
operator from $L^p(\Omega_1;X)$ into $L^p(\Omega_2;X)$, then we let 
$$
T\overline{\otimes} S\colon L^p(\Omega_1;X)\longrightarrow L^p(\Omega_2;X)
$$
denote this extension. It is well-known that if $T$ is positive (i.e. $T(f)\geq 0$ 
for any $f\geq 0$), then $T\otimes S$ has a bounded
extension for any $S\colon X\to X$.

\section{Ritt operators and their $H^\infty$ functional calculus}\label{Ritt}

An operator $T\colon X\to X$ is called power bounded if
there exists a constant $C_0>0$ such that
$$
\forall\, n\geq 0,\qquad \norm{T^n}\leq C_0.
$$
Then a power bounded $T$ is called a Ritt operator if there 
exists a constant $C_1>0$ such that
\begin{equation}\label{C1}
\forall\, n\geq 1,\qquad n\norm{T^n -T^{n-1}}\leq C_1.
\end{equation}
Ritt operators can be characterized by a spectral condition, as follows.
Let 
$$
\Ddb=\{z\in\Cdb\, :\, \vert z\vert<1\}
$$
be the open unit disc. For
any $T\in B(X)$, let $\sigma(T)$ denote the spectrum of $T$. Then 
$T$ is a Ritt operator if and only if $\sigma(T)\subset \overline{\Ddb}$
and there exists a constant $K>0$ such that 
\begin{equation}\label{Res}
\forall\, z\in\Cdb\setminus\overline{\Ddb},\qquad
\norm{(z -T)^{-1}}\leq \,\frac{K}{\vert z-1\vert}\,.
\end{equation}
This result goes back to \cite{Ly,NZ,Ne}, see also \cite{AL,V} 
for complements.

Let us now turn to functional calculus.  
For any angle $\gamma\in (0,\frac{\pi}{2})$,
consider the so-called Stolz domain $B_\gamma$ as sketched in Figure 1. 
In analytic terms, $B_\gamma$ is defined as the
interior of the convex hull of $1$ and the disc $\{z\in\Cdb\, :\,\vert z\vert<\sin\gamma\}$.

\begin{figure}[ht]
\vspace*{2ex}
\begin{center}
\includegraphics[scale=0.4]{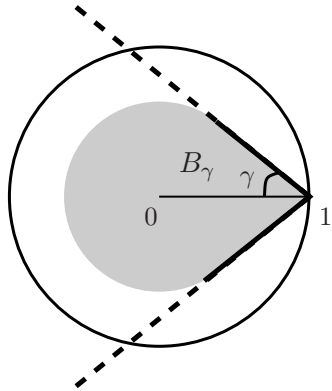}
\begin{picture}(0,0)
\put(-2,65){{\footnotesize $1$}}
\put(-68,65){{\footnotesize $0$}}
\put(-32,81){{\footnotesize $\gamma$}}
\put(-55,85){{\small $B_\gamma$}}
\end{picture}
\end{center}
\caption{\label{f1} Stolz domain}
\end{figure}

It turns out that if $T$ is a Ritt operator, then $\sigma(T)\subset \overline{B_\gamma}$
for some $\gamma\in (0,\frac{\pi}{2})$. In particular we have
$\sigma(T)\subset\Ddb\cup\{1\}$.

One important feature of the above Slolz domains is that for any $\gamma\in (0,\frac{\pi}{2})$
there exists a constant $C_{\gamma}>0$ such that
\begin{equation}\label{Stolz}
\forall\, z\in B_\gamma,\qquad
|1-z|\leq C_\gamma (1-|z|).
\end{equation}

Let $\P$ be the algebra of all complex polynomials. Let $\gamma\in (0,\frac{\pi}{2})$.
Following \cite{L} (to which we refer for more information), 
we say that an operator $T\colon X \to X$ has a bounded $H^\infty(B_\gamma)$ functional calculus
if there exists a constant $C\geq 1$ such that
\begin{equation}\label{FC}
\forall\, \varphi\in\P,\qquad \norm{\varphi(T)}\leq 
C\sup\bigl\{\vert\varphi(z)\vert\, :\, z\in B_\gamma\bigr\}.
\end{equation}
A routine argument 
shows that this condition implies 
$$
\sigma(T)\subset \overline{B_\gamma}.
$$
Further if $T$ satisfies (\ref{FC}), then $T$ is a Ritt operator. 
Indeed (\ref{FC}) applied to $z\mapsto z^n$ 
immediately implies that $\norm{T^n} \leq C$ for any $n\geq 0$. Next define
$\varphi_n\in\P$ by 
$$
\varphi_n(z) = n(z^n - z^{n-1})
$$
for any $n\geq 1$.
Then for any $z\in B_\gamma$, we have
$$
\vert \varphi_n(z) \vert \leq C_\gamma \, n\vert z\vert^{n-1}(1-\vert z\vert)
$$
by (\ref{Stolz}). An elementary computation shows that 
$$
\forall\, t\in (0,1),\qquad nt^{n-1}(1-t)\,\leq\,\Bigl(1-\frac{1}{n}\Bigr)^{n-1}.
$$
We deduce that 
$$
\sup\bigl\{\vert  \varphi_n(z) \vert \, :\, z\in B_\gamma,\ n\geq 1\bigr\}<\infty.
$$
Thus (\ref{FC}) implies the boundedness of the sequence $(\varphi_n(T))_{n\geq 1}$,
hence an estimate (\ref{C1}). We record this simple fact for further use.

\begin{lemma}\label{H-Ritt}
Let $T\colon X\to X$ be an operator satisfying (\ref{FC}) 
for some $\gamma\in (0,\frac{\pi}{2})$.
Then $T$ is Ritt operator.
\end{lemma}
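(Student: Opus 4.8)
The plan is to verify the two defining conditions of a Ritt operator directly from the functional calculus bound (\ref{FC}), by applying it to a well-chosen family of polynomials. In effect the lemma records the computation already sketched before its statement, so the strategy is to transfer scalar sup-norm estimates over the Stolz domain $B_\gamma$ through (\ref{FC}).

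First I would establish power boundedness by testing (\ref{FC}) on the monomials $z\mapsto z^n$. Since $B_\gamma$ is contained in the closed unit disc $\overline{\Ddb}$, one has $\sup\{\vert z\vert^n\, :\, z\in B_\gamma\}\leq 1$ for every $n\geq 0$, and hence (\ref{FC}) yields $\norm{T^n}\leq C$ uniformly in $n$. This secures the power boundedness part of the definition.

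For the quantitative condition (\ref{C1}), I would introduce the polynomials $\varphi_n(z)=n(z^n-z^{n-1})=nz^{n-1}(z-1)$, so that $\varphi_n(T)=n(T^n-T^{n-1})$. The heart of the matter is to bound $\sup\{\vert\varphi_n(z)\vert\, :\, z\in B_\gamma\}$ uniformly in $n$. Here I would invoke the defining geometric feature (\ref{Stolz}) of the Stolz domain, namely $\vert 1-z\vert\leq C_\gamma(1-\vert z\vert)$ on $B_\gamma$, to obtain $\vert\varphi_n(z)\vert\leq C_\gamma\,n\vert z\vert^{n-1}(1-\vert z\vert)$. The remaining estimate is the elementary scalar inequality $nt^{n-1}(1-t)\leq\bigl(1-\tfrac{1}{n}\bigr)^{n-1}\leq 1$ for $t\in(0,1)$, whose maximum over $t$ is attained at $t=(n-1)/n$; this makes the supremum finite and, crucially, independent of $n$.

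Combining these observations with (\ref{FC}) applied to $\varphi_n$ then gives $n\norm{T^n-T^{n-1}}=\norm{\varphi_n(T)}\leq C\,\sup\{\vert\varphi_n(z)\vert\, :\, z\in B_\gamma\}\leq C'$, which is precisely the estimate (\ref{C1}). I do not expect any genuine obstacle: the argument is a clean passage of uniform scalar bounds through the functional calculus, and the only point requiring a moment's care is verifying the elementary inequality and confirming that the bound on $\vert\varphi_n\vert$ is uniform in both $z$ and $n$, rather than in $z$ alone.
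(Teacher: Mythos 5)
Your proposal is correct and follows essentially the same route as the paper: test (\ref{FC}) on the monomials $z\mapsto z^n$ for power boundedness, then on $\varphi_n(z)=n(z^n-z^{n-1})$, using the Stolz-domain estimate (\ref{Stolz}) together with the elementary bound $nt^{n-1}(1-t)\leq\bigl(1-\tfrac{1}{n}\bigr)^{n-1}$ to get a uniform sup-norm bound. This is exactly the computation the paper records just before stating the lemma.
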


Condition (\ref{FC}) is close but different from the notion of polynomial boundedness. 
Recall that an operator $T\colon X\to X$ is called polynomially bounded 
if there exists a constant $C\geq 1$ such that 
$\norm{\varphi(T)}\leq C\sup\{\vert \varphi(z)\vert \, :\, z\in\Ddb\}$. Clearly if $T$ admits
a bounded $H^\infty(B_\gamma)$ functional calculus, then it is polynomially
bounded. However there exist polynomially bounded Ritt operators 
which do not admit any bounded $H^\infty(B_\gamma)$ functional calculus \cite{LL}.

\begin{definition}\label{H}
We say that a Ritt operator $T\in B(X)$ has a bounded $H^\infty$ functional calculus
if it admits a bounded $H^\infty(B_\gamma)$ functional calculus
for some $\gamma\in (0,\frac{\pi}{2})$.
\end{definition}

Recent papers show the relevance of
this notion for the study of Ritt operators. It is proved in \cite{HH, L} that
for a large class of Banach spaces $X$, a Ritt operator $T\colon X\to X$ 
has a bounded $H^\infty$ functional calculus
in the above sense if and only if $T$ and its adjoint 
$T^*\colon X^*\to X^*$ satisfy certain square functions 
estimates which naturally arise in the harmonic analysis of the 
discrete semigroup $(T_n)_{n\geq 0}$.
We refer the reader to the papers \cite{LX1, LX2} 
for applications of this characterization of 
bounded $H^\infty$ functional calculus.

We now briefly discuss sectorial operators, which will be used in Section \ref{UMD}.
For any angle $\omega\in (0,\pi)$, set 
$$
\Sigma_\omega=\bigl\{
\lambda\in\Cdb^*\, :\,\vert{\rm Arg}(\lambda)\vert<\omega\bigr\}.
$$
Recall that a closed operator $A\colon D(A)\to X$  with dense domain
$D(A)\subset X$ is called sectorial of type $\omega$ if its spectrum
is included in $\overline{\Sigma_\omega}$ and for any $\alpha\in(\omega,\pi)$
there exists a constant $K_\alpha>0$ such that 
\begin{equation}\label{Sector}
\forall\, \lambda\in\Cdb\setminus \overline{\Sigma_\alpha},\qquad
\norm{(\lambda - A)^{-1}}\leq\,\frac{K_\alpha}{\vert\lambda\vert }\,.
\end{equation}
A simple connection between the Ritt condition and sectoriality
is that an operator $T\in B(X)$ is a Ritt operator
if and only if  $A=I_X-T$ is sectorial of
type $<\frac{\pi}{2}$ and $\sigma(T)\subset \Ddb\cup\{1\}$.
This follows from comparing (\ref{Res}) and (\ref{Sector}),
see \cite{NZ} for details.

Let $\R$ be the algebra of all rational functions
with nonpositive degree and poles in the closed
half-line $\Rdb_-$.
For any $\phi\in\R$ and any sectorial operator
$A$, $\phi(A)$ is a well-defined bounded operator on 
$X$. For any $\alpha\in(0,\pi)$, we say that a
sectorial operator $A$ has a bounded $H^\infty(\Sigma_\alpha)$
functional calculus if there exists a constant $C\geq 1$ such that
$$
\forall\, \phi\in\R,\qquad \norm{\phi(A)}\leq C\sup\bigl\{
\vert \phi(\lambda)\vert \; :\, \lambda\in\Sigma_\alpha\bigr\}.
$$
This definition is equivalent to other classical ones that the 
interested reader will find e.g. in \cite{Haase, KW}.

A strong connection between $H^\infty$ calculus for sectorial 
and Ritt operators is given by the following statement.

\begin{theorem}\label{S-R}(\cite[Proposition 4.1]{L})
Let $T\in B(X)$ be a Ritt operator and let $A=I_X-T$. Then $T$
has a bounded $H^\infty$ functional calculus (in the sense of Definition
\ref{H}) if and only if there exists $\alpha\in(0,\frac{\pi}{2})$
such that $A$ has a bounded $H^\infty(\Sigma_\alpha)$
functional calculus.
\end{theorem}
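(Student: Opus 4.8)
The plan is to transport the polynomial calculus on $B_\gamma$ to the rational calculus on a sector via the affine substitution $\lambda = 1-z$. Since $T$ is a Ritt operator, the equivalence recorded just before the statement shows that $A=I_X-T$ is a \emph{bounded} sectorial operator of some type $\omega\in(0,\frac{\pi}{2})$, with $\sigma(A)=1-\sigma(T)$ a compact subset of $\overline{\Sigma_\omega}$. First I would record the geometric dictionary attached to $z\mapsto 1-z$: this map sends $B_\gamma$ onto a bounded truncated sector $1-B_\gamma$ with vertex $0$, aperture $\gamma$, contained in $\Sigma_\gamma$ and filling a neighbourhood-in-$\Sigma_\gamma$ of $0$ out to the arc $\{|\lambda-1|=\sin\gamma\}$. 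Writing $\lambda=1-z$ one has $zI-T=A-\lambda I$, hence $(z-T)^{-1}=-(\lambda-A)^{-1}$, and since $dz=-d\lambda$ the Cauchy kernels satisfy $(z-T)^{-1}\,dz=(\lambda-A)^{-1}\,d\lambda$. Consequently, for $\varphi\in\P$ the operator $\varphi(T)$, obtained by integrating $\varphi(z)(z-T)^{-1}$ over a contour surrounding $\sigma(T)$ in $B_{\gamma'}$, coincides with $g(A)$ for $g(\lambda)=\varphi(1-\lambda)$, the latter being the integral of $g(\lambda)(\lambda-A)^{-1}$ over the corresponding bounded contour in $1-B_{\gamma'}$; moreover $\sup_{B_\gamma}|\varphi|=\sup_{1-B_\gamma}|g|$.

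This dictionary reduces the theorem to the equivalence, for the bounded sectorial operator $A$, between a bounded $H^\infty$ calculus over the truncated sector $1-B_\gamma$ and a bounded $H^\infty(\Sigma_\alpha)$ calculus over the full sector. The implication from $T$ to $A$ is the easy half. Given $\phi\in\R$, set $\psi(z)=\phi(1-z)$: the poles of $\phi$, lying in $\Rdb_-$, are carried to poles of $\psi$ in $[1,\infty)$, hence off $\overline{B_\gamma}$, so $\psi\in H^\infty(B_\gamma)$ and the dictionary gives $\phi(A)=\psi(T)$. A bounded $H^\infty(B_\gamma)$ calculus for $T$ then yields $\norm{\phi(A)}=\norm{\psi(T)}\leq C\sup_{B_\gamma}|\psi|=C\sup_{1-B_\gamma}|\phi|\leq C\sup_{\Sigma_\alpha}|\phi|$, the last inequality holding as soon as $1-B_\gamma\subset\Sigma_\alpha$, i.e. $\alpha\geq\gamma$; one simply chooses $\alpha$ in $(\omega,\frac{\pi}{2})$ with $\alpha\geq\gamma$.

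The converse implication, from $A$ to $T$, is the main obstacle, and is where the boundedness of $A$ is essential. Given $\varphi\in\P$, the pulled-back symbol $g(\lambda)=\varphi(1-\lambda)$ is a genuine polynomial in $\lambda$, hence unbounded on the unbounded sector $\Sigma_\alpha$; only its restriction to $1-B_\gamma$ is controlled by $\sup_{B_\gamma}|\varphi|$, so $g$ is not directly an admissible symbol for the sectorial calculus. I would regularise by the factors $r_\delta(\lambda)=(1+\delta\lambda)^{-N}\in\R$, which have a pole at $-\delta^{-1}\in\Rdb_-$ and nonpositive degree; for $N$ large, $g\,r_\delta\in H^\infty(\Sigma_\alpha)$, so the hypothesis gives $\norm{(g\,r_\delta)(A)}\leq C\norm{g\,r_\delta}_{\infty,\Sigma_\alpha}$. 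Since $\sigma(A)$ is compact and $r_\delta\to1$ uniformly on compact sets, $(g\,r_\delta)(A)\to g(A)=\varphi(T)$ as $\delta\to0$. The delicate point — the technical heart of the proof — is to bound $\norm{g\,r_\delta}_{\infty,\Sigma_\alpha}$ by a constant multiple of $\sup_{1-B_\gamma}|g|=\sup_{B_\gamma}|\varphi|$, uniformly in $\delta$. Here one exploits that, $\sigma(A)$ being compact and the resolvent decaying like $|\lambda|^{-1}$ along $\partial\Sigma_\alpha$ by (\ref{Sector}), the integral representing $g(A)$ may be deformed to a bounded contour enclosing $\sigma(A)$ inside $1-B_\gamma$; thus only the values of $g$ on a bounded portion of $\Sigma_\alpha$ comparable to $1-B_\gamma$ actually govern the estimate, the tail contribution being annihilated in the limit by the regularisers. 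Making this reduction to a bounded contour precise, and checking that the resulting constant depends only on $\gamma,\alpha$ and the calculus constant of $A$, is the crux; the remaining angle bookkeeping (aligning $\gamma$ and $\alpha$ in $(0,\frac{\pi}{2})$, using $\omega<\frac{\pi}{2}$) is routine, and Lemma \ref{H-Ritt} guarantees that the $B_\gamma$ calculus so obtained is compatible with the Ritt structure.
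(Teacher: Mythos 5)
First, a point of order: the paper does not prove this statement --- it is imported verbatim from \cite[Proposition 4.1]{L} --- so there is no in-paper argument to compare yours against, and I assess your proposal on its own terms. Your change of variables $\lambda=1-z$, the identity $(z-T)^{-1}\,dz=(\lambda-A)^{-1}\,d\lambda$, and the geometric description of $1-B_\gamma$ are all correct, and the direction ``calculus for $T$ implies calculus for $A$'' is essentially right, modulo one point you should make explicit: condition (\ref{FC}) is a bound over \emph{polynomials} only, whereas $\psi=\phi(1-\cdot)$ for $\phi\in\R$ is a rational function (with poles in $[1,\infty)$, hence accumulating at the boundary point $1$ of $B_\gamma$ if $\phi$ has a pole at $0$), so before writing $\norm{\psi(T)}\leq C\sup_{B_\gamma}\vert\psi\vert$ you must extend the polynomial estimate to bounded holomorphic functions on $B_\gamma$ by an approximation or convergence-lemma argument. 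Standard, but not free.

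The converse direction is where the proposal genuinely breaks down. The uniform estimate $\norm{g\,r_\delta}_{\infty,\Sigma_\alpha}\leq C\sup_{B_\gamma}\vert\varphi\vert$ is false: take $\varphi(z)=z^n$, so that $\sup_{B_\gamma}\vert\varphi\vert\leq 1$ while $g(\lambda)=(1-\lambda)^n$; at $\vert\lambda\vert\sim\delta^{-1}$ in $\Sigma_\alpha$ one has $\vert g(\lambda)r_\delta(\lambda)\vert\gtrsim \delta^{-n}2^{-N}$, which blows up as $\delta\to 0$. Your fallback --- deforming to a bounded contour $\Gamma$ around $\sigma(A)$ inside $1-B_{\gamma}$ so that ``only the values of $g$ there govern the estimate'' --- cannot be repaired either. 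Such a contour must approach $0$, which in general lies in $\sigma(A)$, where the resolvent is controlled only by $K/\vert\lambda\vert$ via (\ref{Sector}); to make $\int_\Gamma\vert g(\lambda)\vert\,\norm{(\lambda-A)^{-1}}\,\vert d\lambda\vert$ converge one needs polynomial decay of $g-g(0)$ at $0$, and the constants this produces (Bernstein-type bounds on $\varphi'$) depend on $\deg\varphi$, not on $\sup_{B_\gamma}\vert\varphi\vert$. More fundamentally, any estimate obtained this way uses only the resolvent bound (\ref{Res}), i.e.\ only the Ritt property, and never the hypothesis that $A$ has a bounded $H^\infty(\Sigma_\alpha)$ calculus; a uniform bound by $\sup_{B_\gamma}\vert\varphi\vert$ from such an argument would show that \emph{every} Ritt operator satisfies (\ref{FC}), contradicting the existence of polynomially bounded Ritt operators with no bounded $H^\infty(B_\gamma)$ calculus recalled in Section \ref{Ritt} (see \cite{LL}). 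The missing ingredient is the passage from the unbounded sector to a truncated one: one must show that for the \emph{bounded} sectorial operator $A$, a bounded $H^\infty(\Sigma_\alpha)$ calculus already controls $\norm{f(A)}$ by the supremum of $f$ over $\Sigma_{\alpha'}\cap D(0,R)$ for suitable $\alpha'$ and $R>\norm{A}$, after splitting off $f(0)$ and treating the regularised remainder. That reduction is the technical core of \cite[Proposition 4.1]{L} and is absent from your sketch.
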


\section{The BAR condition and UMD Banach lattices}\label{UMD}

Let $X$ be an arbitrary Banach space.
We adopt the notation (\ref{Def-Average}) for any $\nu\in M(G)$ and any
bounded strongly continuous representation $\pi\colon G\to B(X)$.

A straightforward application of Fubini's Theorem shows that 
for any $\nu_1,\nu_2\in M(G)$, we have
$$
S(\pi,\nu_2)S(\pi,\nu_1) = S(\pi,\nu_2\ast\nu_1).
$$
This implies that for any $\varphi\in \P$ and for any
$\nu\in M(G)$,
\begin{equation}\label{Pol-nu}
\varphi(S(\pi,\nu))= S(\pi,\nu_\varphi),
\end{equation}
where $\nu_\varphi:=\varphi(\nu)\in M(G)$ is obtained by
applying polynomial functional calculus in the Banach algebra $M(G)$.

For any $1\leq p\leq\infty$, let
$$
C_{\nu,p}^X \colon L^p(G;X)\longrightarrow L^p(G;X)
$$
be the convolution operator defined by setting 
$C_{\nu,p}^X(f) = \nu\ast f =\int_G f(\cdotp -t)\, d\nu(t)\,$ for any
$f$ in $L^p(G;X)$. Note that if $p\not=\infty$ and
$\lambda_p^X\colon G\to
B(L^p(G;X))$ denotes the regular representation defined
by 
\begin{equation}\label{Trans}
\bigl[\lambda_p^X(t)f\bigr](s) = f(s-t),\qquad f\in L^p(G;X),\, s,t\in G,
\end{equation}
then we have
\begin{equation}\label{Conv}
C_{\nu,p}^X = S(\lambda_p^X,\nu).
\end{equation}
For convenience we will write $C_{\nu,p}$
instead of $C_{\nu,p}^X$ when $X=\Cdb$.
Observe that with the notation introduced at the end 
of Section \ref{Intro}, we have
\begin{equation}\label{Tensor}
C_{\nu,p}^X =
C_{\nu,p}\overline{\otimes} I_X
\end{equation} 
for any $1\leq p<\infty$.

We will use the following well-known transference principle, which 
is a variant of \cite[Theorem 2.4]{CW}. 
We include a proof for the sake of completeness.

\begin{proposition}\label{Transfer}
Let $\pi\colon G\to B(X)$ be a bounded strongly continuous
representation and set 
$$
\norm{\pi}=\sup_{t\in G}\norm{\pi(t)}. 
$$
Then for any $1<p<\infty$, we have
\begin{equation}\label{Transfer1}
\bignorm{S(\pi,\nu)\colon X\to X}\leq \norm{\pi}^2
\bignorm{C_{\nu,p}^X\colon L^p(G;X)\to L^p(G;X)}.
\end{equation}
\end{proposition}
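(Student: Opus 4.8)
The plan is to prove this by the Coifman–Weiss transference principle, using that every locally compact abelian group is amenable and hence possesses Følner sets. First I would reduce to the case where $\nu$ has compact support. Since $\nu$ is a regular probability measure, inner regularity yields compact sets $K_n\subset G$ with $\nu(G\setminus K_n)\to 0$; writing $\nu_n$ for the restriction of $\nu$ to $K_n$, each $\nu_n$ is a positive compactly supported measure with $\norm{\nu-\nu_n}_{M(G)}=\nu(G\setminus K_n)\to 0$. Invoking the elementary bounds $\norm{S(\pi,\rho)}\leq\norm{\pi}\,\norm{\rho}_{M(G)}$ and $\norm{C_{\rho,p}^X}\leq\norm{\rho}_{M(G)}$, valid for every positive $\rho\in M(G)$, I get $S(\pi,\nu_n)\to S(\pi,\nu)$ and $C_{\nu_n,p}^X\to C_{\nu,p}^X$ in operator norm; so it will suffice to prove (\ref{Transfer1}) for each $\nu_n$ and then let $n\to\infty$.

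So fix a positive measure $\nu$ with compact support $K$, which by enlarging $K$ I may assume symmetric and containing $e$, and fix $x\in X$. The core is a test-function computation. For a compact $W\subset G$ of positive Haar measure, define $f\in L^p(G;X)$ by $f(s)=\chi_W(s)\,\pi(-s)x$; strong continuity of $\pi$ makes $s\mapsto\pi(-s)x$ continuous, so $f\in L^p(G;X)$ with $\norm{f}_{L^p(G;X)}\leq\norm{\pi}\,\vert W\vert^{1/p}\norm{x}$. Using $\pi(t-s)=\pi(-s)\pi(t)$ I compute
\begin{equation*}
\bigl[C_{\nu,p}^X f\bigr](s)=\int_G\chi_W(s-t)\,\pi(-s)\pi(t)x\,d\nu(t).
\end{equation*}
On the set $W_0=\{s\in G:\ s-K\subseteq W\}$ the factor $\chi_W(s-t)$ equals $1$ for every $t\in K\supseteq\mathrm{supp}\,\nu$, so that $[C_{\nu,p}^X f](s)=\pi(-s)S(\pi,\nu)x$ for $s\in W_0$; since $\norm{y}\leq\norm{\pi}\,\norm{\pi(-s)y}$ for all $y$, this gives the pointwise lower bound $\norm{[C_{\nu,p}^X f](s)}\geq\norm{\pi}^{-1}\norm{S(\pi,\nu)x}$ on $W_0$.

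Integrating this over $W_0$ and comparing with the upper bound $\norm{C_{\nu,p}^X f}_{L^p}\leq\norm{C_{\nu,p}^X}\,\norm{f}_{L^p}$ yields
\begin{equation*}
\norm{S(\pi,\nu)x}\leq\norm{\pi}^2\Bigl(\frac{\vert W\vert}{\vert W_0\vert}\Bigr)^{1/p}\norm{C_{\nu,p}^X}\,\norm{x}.
\end{equation*}
It remains to choose $W$ so that $\vert W\vert/\vert W_0\vert$ is as close to $1$ as we like, and this is the one genuinely delicate point: because $W_0$ is cut out by an intersection over the whole (continuum) compact set $K$, the naive pointwise Følner estimate on individual translates does not control $\vert W\setminus W_0\vert$. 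The fix I would use is to take $W=K+V$, where $V$ is a Følner set for $K$, i.e. a compact set of positive measure with $\vert K+V\vert\leq(1+\epsilon)\vert V\vert$, which exists because the locally compact abelian group $G$ is amenable. Then $V\subseteq W_0$ (for $s\in V$ and $k\in K$, symmetry of $K$ gives $s-k=(-k)+s\in K+V=W$), whence $\vert W_0\vert\geq\vert V\vert\geq(1+\epsilon)^{-1}\vert W\vert$ and $(\vert W\vert/\vert W_0\vert)^{1/p}\leq(1+\epsilon)^{1/p}$. Letting $\epsilon\to 0$ gives $\norm{S(\pi,\nu)x}\leq\norm{\pi}^2\norm{C_{\nu,p}^X}\norm{x}$; since $x$ is arbitrary this is (\ref{Transfer1}) for compactly supported $\nu$, and the reduction of the first paragraph completes the proof.
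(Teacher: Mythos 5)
Your proof is correct and follows essentially the same route as the paper: the same transference test function $f(s)=\chi_{W}(s)\pi(-s)x$ with $W$ a thickening of a F\o lner set by the support $K$ of $\nu$, the same pointwise identity $[C_{\nu,p}^X f](s)=\pi(-s)S(\pi,\nu)x$ on the good set, the same $(\vert W\vert/\vert V\vert)^{1/p}\to 1$ conclusion, and the same reduction from general $\nu$ to compactly supported $\nu$ by inner regularity. The only cosmetic difference is that the paper integrates directly over the F\o lner set $V$ and supports $f$ on $V-K$, whereas you introduce the intermediate set $W_0$ and observe $V\subseteq W_0$; the ``delicate point'' you flag is resolved identically in both versions.
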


\begin{proof}
We first assume that $\nu$ has support in a compact subset $K\subset G$.
Let $V$ be an arbitrary open set, with 
$0<\vert V\vert <\infty$. Let $x\in X$ and define $f\colon G\to X$
by setting
$$
f(t) =\chi_{V-K}(t)\pi(-t)x,\qquad t\in G.
$$
The assumptions imply that $V-K$ has a finite Haar measure, hence $f$ belongs to
$L^p(G;X)$ with 
$$
\norm{f}_p^p \leq\vert V-K\vert \norm{\pi}^p\norm{x}^p.
$$

For any $s\in V$, we may write
$$
S(\pi,\nu)x =\pi(s)\pi(-s)S(\pi,\nu)x = \pi(s) \int_G \pi(t-s)x\,d\nu(t),
$$
which yields
$$
\norm{S(\pi,\nu)x}\leq \norm{\pi} \Bignorm{\int_G \pi(t-s)x\,d\nu(t)}.
$$
Integrating over $V$, we deduce
$$
\vert V\vert\norm{S(\pi,\nu)x}^p\leq \norm{\pi}^p \int_G\chi_V(s)
\Bignorm{\int_G \pi(t-s)x\,d\nu(t)}^p\, ds\,.
$$
Since $\nu$ has support in $K$, $\int_G \pi(t-s)x\,d\nu(t)\,$
is equal to 
$\int_G \chi_{V-K}(s-t) \pi(t-s)x\,d\nu(t)$ for any $s\in V$.
Consequently,
$$
\vert V\vert\norm{S(\pi,\nu)x}^p\leq \norm{\pi}^p \int_G \,
\Bignorm{\int_G \chi_{V-K}(s-t) \pi(t-s)x\,d\nu(t)}^p\, ds\,.
$$
By definition,
$$
\bigl[C_{\nu,p}^X(f)\bigr](s) = \int_G \chi_{V-K}(s-t) \pi(t-s)x\,d\nu(t)
$$
for a.e. $s\in G$. Hence we obtain
\begin{align*}
\vert V\vert\norm{S(\pi,\nu)x}^p & \leq\norm{\pi}^p\norm{C_{\nu,p}^X}^p
\norm{f}_p^p \\ & \leq \norm{C_{\nu,p}^X}^p 
\vert V-K\vert \norm{\pi}^{2p}\norm{x}^p.
\end{align*}
We deduce that
$$
\norm{S(\pi,\nu)x}\leq\biggl(
\frac{\vert V-K\vert }{\vert V\vert}\biggr)^{\frac{1}{p}}
\norm{\pi}^2\norm{C_{\nu,p}}\norm{x}.
$$

The group $G$ is abelian, hence amenable. Thus according to
Folner's condition (see e.g. \cite[Chapter 2]{CW}),
we can choose $V$ such that 
$\frac{\vert V-K\vert }{\vert V\vert}$ is arbitrary close to 1.
Hence the above inequality shows that $\nu$ satisfies (\ref{Transfer1}),
in the case when $\nu$ has compact support.

Now consider an arbitrary $\nu\in M(G)$ (without any assumption on its support).
Since this measure is regular, 
there is a sequence $(K_n)_{n\geq 1}$ of compact 
subsets of $G$ such that 
$$
\vert \nu\vert (G)=\lim_{n\to\infty} \vert \nu\vert (K_n).
$$
Define $\nu_n =\nu_{\vert K_n}$ for any $n\geq 1$. 
Then $\norm{\nu_n - \nu}_{M(G)}\to
0$ hence $\norm{C_{\nu_n,p}^X -  C_{\nu,p}^X}\to 0$ when $n\to\infty$.
Further for any $x\in X$,
$S(\pi,\nu_n)x\to S(\pi,\nu)x$ when $n\to\infty$, by Lebesgue's Theorem.
By the first part of the proof, each $\nu_n$ satisfies (\ref{Transfer1}),
hence $\nu$ satisfies (\ref{Transfer1}) as well.
\end{proof}

Following \cite{CCL} we say that a probability measure $\nu\in M(G)$
has bounded angular ratio (BAR in short) if there exists a constant
$K\geq 1$ such that 
$$
\bigl\vert 1 - \widehat{\nu}(\xi)\bigr\vert\,\leq\, K\bigl( 1- 
\vert \widehat{\nu}(\xi) \vert\bigr),
\qquad \xi\in \widehat{G}.
$$
It is well-known that this holds true if and only if there
exists an angle $\gamma\in (0,\frac{\pi}{2})$ such that
$\widehat{\nu}(\xi)\in \overline{B_\gamma}$ 
for any $\xi\in \widehat{G}$.
The relevance of BAR for the study of Q.1 and Q.2 is shown by 
the following elementary result.

\begin{lemma}\label{Elem}
Let $\nu\in M(G)$ be a probability measure and let $H$
be a Hilbert space. Then $C_{\nu,2}^H$ is a Ritt operator 
if and only if $\nu$ has BAR. In this
case, the operator $C_{\nu,2}^H$ admits a bounded 
$H^\infty$ functional calculus.
\end{lemma}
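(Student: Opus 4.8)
The plan is to diagonalise $C_{\nu,2}^H$ by Plancherel's theorem and then read off both the spectrum and the functional calculus directly from the symbol $\widehat{\nu}$. First I would recall that the scalar convolution operator $C_{\nu,2}$ on $L^2(G)$ is, under the Plancherel identification $L^2(G)\cong L^2(\widehat{G})$, unitarily equivalent to the operator of multiplication by $\widehat{\nu}$ on $L^2(\widehat{G})$. Combining this with the tensor identity (\ref{Tensor}), namely $C_{\nu,2}^H=C_{\nu,2}\overline{\otimes}I_H$, and with $L^2(G;H)\cong L^2(\widehat{G};H)$, the operator $C_{\nu,2}^H$ becomes multiplication by the scalar function $\widehat{\nu}$ on $L^2(\widehat{G};H)$. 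Since $\nu$ is a probability measure, $\widehat{\nu}$ is continuous and takes its values in $\overline{\Ddb}$.

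Next I would exploit the fact that the polynomial functional calculus of a multiplication operator is transparent. For any $\varphi\in\P$ the operator $\varphi(C_{\nu,2}^H)$ is multiplication by $\varphi\circ\widehat{\nu}$, whence $\norm{\varphi(C_{\nu,2}^H)}=\sup_{\xi\in\widehat{G}}\vert\varphi(\widehat{\nu}(\xi))\vert$; here the passage from the $L^\infty$-norm to the pointwise supremum uses that $\widehat{\nu}$ is continuous and that the Haar measure on $\widehat{G}$ has full support. By the same argument the spectrum of $C_{\nu,2}^H$ equals the essential range of $\widehat{\nu}$, that is $\overline{\widehat{\nu}(\widehat{G})}$.

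For the implication that BAR suffices, suppose $\widehat{\nu}(\xi)\in\overline{B_\gamma}$ for all $\xi$ and some $\gamma\in(0,\frac{\pi}{2})$, which is the geometric form of the BAR condition recalled above. Then $\sup_{\xi}\vert\varphi(\widehat{\nu}(\xi))\vert\leq\sup\{\vert\varphi(z)\vert:z\in\overline{B_\gamma}\}$. Since $\varphi$ is continuous and $B_\gamma$ is a bounded open convex set whose closure is $\overline{B_\gamma}$, this last supremum equals $\sup\{\vert\varphi(z)\vert:z\in B_\gamma\}$, so that (\ref{FC}) holds with constant $C=1$. Hence $C_{\nu,2}^H$ admits a bounded $H^\infty(B_\gamma)$ functional calculus, which simultaneously yields the bounded $H^\infty$ functional calculus claimed in the statement and, via Lemma \ref{H-Ritt}, the Ritt property.

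Conversely, if $C_{\nu,2}^H$ is a Ritt operator, then by the general fact recalled after Figure 1 its spectrum is contained in some $\overline{B_\gamma}$; since this spectrum is $\overline{\widehat{\nu}(\widehat{G})}$, we obtain $\widehat{\nu}(\xi)\in\overline{B_\gamma}$ for every $\xi$, which is precisely BAR. The only points requiring care are the identification of the spectrum and of the functional-calculus norm with data of the multiplication operator (the spectral theory of multiplication operators on $L^2(\widehat{G};H)$) and the open-versus-closed Stolz domain issue in the sufficiency direction; the continuity of $\varphi$ disposes of the latter and shows that no enlargement of $\gamma$ is needed, so I expect no substantial obstacle.
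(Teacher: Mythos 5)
Your proof is correct and follows essentially the same route as the paper: both arguments rest on the Plancherel/Fourier diagonalisation of $C_{\nu,2}^H$ (the paper phrases it as normality plus $\sigma(C_{\nu,2})=\overline{\{\widehat{\nu}(\xi)\}}$, you make the multiplication-operator structure explicit), and both then read the Ritt property and the $H^\infty(B_\gamma)$ bound off the condition $\sigma(C_{\nu,2})\subset\overline{B_\gamma}$, which is equivalent to BAR.
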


\begin{proof}
For any $T\in B(L^2(G))$, $T\otimes I_H$ extends to a bounded
operator on $L^2(G;H)$ with $\norm{T\overline{\otimes} I_H}=\norm{T}$.
This immediately implies that $\sigma(C_{\nu,2}^H)=\sigma(C_{\nu,2})$.
Further $C_{\nu,2}^H$ is a normal operator on $L^2(G;H)$, hence
\begin{equation}\label{normal}
\forall\, \varphi\in\P,\qquad
\norm{\varphi(C_{\nu,2}^H)} = \sup\bigl\{\vert\varphi(z)\vert\, :\, 
z\in \sigma(C_{\nu,2})\bigr\}.
\end{equation}
Applying Fourier transform, we have
$$
\sigma(C_{\nu,2})=\overline{\bigl\{\widehat{\nu}(\xi)\, 
:\, \xi\in\widehat{G}\bigr\}},
$$
hence $\nu$ has BAR if and only if there exists 
$\gamma\in (0,\frac{\pi}{2})$ such that
$\sigma(C_{\nu,2})\subset \overline{B_\gamma}$. Combining this
equivalence with (\ref{normal}) yields the result.
\end{proof}

It follows from (\ref{Conv}) and Lemma \ref{Elem} that if a probability 
measure $\nu\in M(G)$ is such that $S(\pi,\nu)$ is a Ritt operator
for all bounded strongly continuous representations $\pi$ acting on
Hilbert space, then 
$\nu$ necessarily has BAR. The next theorem
shows that if we consider representations acting on UMD Banach lattices,
this necessary condition is also sufficient.

We refer the reader to \cite{Bu, HVVW} for background and information on 
the UMD property and to \cite{RF} for a more specific study 
for Banach lattices. We merely recall that any UMD Banach space is reflexive,
that Hilbert spaces and $L^p$-spaces for $1<p<\infty$
are UMD and that the UMD  property is stable under taking 
subspaces and quotients.

\begin{theorem}\label{UMD-BL} 
Let $\nu\in M(G)$ be a probability measure with BAR.
Let $X$ be a UMD Banach lattice. For any bounded strongly continuous
representation $\pi\colon G\to B(X)$, $S(\pi,\nu)$ is a Ritt 
operator with a bounded $H^\infty$ functional calculus.
\end{theorem}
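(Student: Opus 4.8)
The plan is to transfer the problem, via Proposition \ref{Transfer}, to a Fourier multiplier estimate for the convolution operator on $L^p(G;X)$, and then to settle that estimate using the geometry of UMD Banach lattices. Fix $1<p<\infty$. I claim it suffices to prove that $C_{\nu,p}^X$ satisfies the estimate (\ref{FC}), i.e. that there are a constant $C$ and an angle $\gamma\in(0,\frac{\pi}{2})$ with
\[
\norm{\varphi(C_{\nu,p}^X)}\leq C\,\sup\bigl\{\vert\varphi(z)\vert : z\in B_\gamma\bigr\},\qquad\varphi\in\P .
\]
Indeed, for $\varphi\in\P$ identity (\ref{Pol-nu}) applied to $\pi$ and to the regular representation $\lambda_p^X$ (together with (\ref{Conv})) gives $\varphi(S(\pi,\nu))=S(\pi,\nu_\varphi)$ and $\varphi(C_{\nu,p}^X)=C_{\nu_\varphi,p}^X$, where $\nu_\varphi=\varphi(\nu)\in M(G)$; since the proof of Proposition \ref{Transfer} is valid for an arbitrary complex measure, applying it to $\nu_\varphi$ yields $\norm{\varphi(S(\pi,\nu))}\leq\norm{\pi}^2\norm{\varphi(C_{\nu,p}^X)}$. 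Hence the displayed estimate for $C_{\nu,p}^X$ gives (\ref{FC}) for $S(\pi,\nu)$ (with constant $\norm{\pi}^2C$), and Lemma \ref{H-Ritt} together with Definition \ref{H} then shows that $S(\pi,\nu)$ is a Ritt operator with a bounded $H^\infty$ functional calculus.

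By the tensor identity (\ref{Tensor}) the target estimate reads $\norm{\varphi(C_{\nu,p})\overline{\otimes}I_X}\leq C\sup_{B_\gamma}\vert\varphi\vert$, where $\varphi(C_{\nu,p})$ is the scalar Fourier multiplier with symbol $\varphi\circ\widehat\nu$. The BAR hypothesis means $\widehat\nu(\widehat G)\subset\overline{B_\gamma}$, so the symbols $\varphi\circ\widehat\nu$ are uniformly bounded by $\sup_{B_\gamma}\vert\varphi\vert$; this already yields the estimate on the Hilbert space $L^2(G)$ (the scalar, $X=\Cdb$, $p=2$ case, which is Lemma \ref{Elem}). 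The whole difficulty is to pass from this sup-norm control of the symbols to the lattice-valued $L^p$ estimate: mere boundedness of a symbol controls the multiplier only on $L^2$, and the extension to $L^p(G;X)$ genuinely requires the UMD lattice structure of $X$. Equivalently, in the sectorial picture one writes $A=I-C_{\nu,p}^X=(I-C_{\nu,p})\overline{\otimes}I_X$, whose scalar part has symbol $1-\widehat\nu\in\overline{\Sigma_\alpha}$ for some $\alpha<\frac{\pi}{2}$, and shows that $A$ has a bounded $H^\infty(\Sigma_\alpha)$ functional calculus; by Theorem \ref{S-R} this is the same as the bounded $H^\infty$ functional calculus of the Ritt operator $C_{\nu,p}^X$.

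This transfer of the $H^\infty$ calculus from the scalar Hilbert-space estimate to $L^p(G;X)$ is the main obstacle. The tool I would use is the Fourier multiplier / square function machinery available for UMD Banach lattices. Concretely, for a space with good geometry — and UMD lattices qualify, being reflexive, of finite cotype, with property $(\alpha)$ and the relevant Littlewood--Paley inequalities — the boundedness of the $H^\infty$ calculus of a Ritt (resp. sectorial) operator is equivalent to $R$-boundedness of the associated families of operators, equivalently to square function estimates; for a Banach lattice these square functions are the explicit expressions $\norm{(\sum_j\vert x_j\vert^2)^{1/2}}_X$ furnished by Krivine's calculus. The plan is therefore to verify that the family $\{\varphi(C_{\nu,p})\overline{\otimes}I_X:\sup_{B_\gamma}\vert\varphi\vert\leq1\}$ is $R$-bounded on $L^p(G;X)$, reducing these lattice square function estimates to the scalar ones on $L^2(G)$ supplied by Lemma \ref{Elem} and propagating them to $L^p(G;X)$ through the UMD-lattice multiplier theorems. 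Establishing this $R$-boundedness and its lattice-valued extension is the technical heart of the argument; once it is in place, the displayed estimate of the first paragraph holds, and Lemma \ref{H-Ritt} completes the proof.
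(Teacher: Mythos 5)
Your first paragraph (the reduction) is correct and is exactly what the paper does: by (\ref{Pol-nu}), (\ref{Conv}) and Proposition \ref{Transfer} applied to $\nu_\varphi$, the whole problem reduces to proving the estimate (\ref{FC}) for the convolution operator $C_{\nu,p}^X$ (the paper takes $p=2$). But from that point on your proposal has a genuine gap: you correctly identify that passing from the sup-norm control of the symbols $\varphi\circ\widehat{\nu}$ on $L^2(G)$ to the estimate on $L^p(G;X)$ is ``the technical heart of the argument,'' and then you do not carry it out. Invoking $R$-boundedness of the family $\{\varphi(C_{\nu,p})\overline{\otimes}I_X:\sup_{B_\gamma}\vert\varphi\vert\leq 1\}$ is not a method but a restatement of (a strengthening of) the goal, and the proposed route --- ``reducing these lattice square function estimates to the scalar ones on $L^2(G)$ and propagating them through UMD-lattice multiplier theorems'' --- has no concrete mechanism behind it. For a general locally compact abelian group $G$ and a general BAR measure $\nu$, the only information you have on the symbol $\varphi\circ\widehat{\nu}$ is a uniform bound, and mere boundedness of a symbol never yields a vector-valued $L^p$ Fourier multiplier theorem; the Marcinkiewicz-type smoothness or variation hypotheses that UMD (lattice) multiplier theorems require are simply not available here.

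The paper closes this gap by a quite different mechanism, which your sketch does not approach. It uses Rubio de Francia's structural theorem to write $X=[Y,H]_\theta$ with $Y$ UMD and $H$ Hilbert, hence $L^2(G;X)=[L^2(G;Y),L^2(G;H)]_\theta$. On the Hilbert component, Lemma \ref{Elem} and Theorem \ref{S-R} give a bounded $H^\infty(\Sigma_\beta)$ calculus for $A^H=I-C_{\nu,2}^H$ with some $\beta<\frac{\pi}{2}$. On the UMD component, one considers the $c_0$-semigroup of positive contractions $T_t=e^{-t}e^{tC_{\nu,2}}$ and applies the Hieber--Pr\"uss transference theorem to get a bounded $H^\infty(\Sigma_\alpha)$ calculus for $A^Y$ for every $\alpha>\frac{\pi}{2}$ --- note this alone is \emph{not} enough for the Ritt property, since the angle exceeds $\frac{\pi}{2}$. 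The decisive step is the interpolation theorem for the $H^\infty$ calculus, which yields a bounded $H^\infty(\Sigma_{\theta\beta+(1-\theta)\alpha})$ calculus for $A^X$; since $\theta>0$, choosing $\alpha$ close to $\frac{\pi}{2}$ pushes the angle strictly below $\frac{\pi}{2}$, and Theorem \ref{S-R} concludes. (The Ritt property itself is obtained separately by Blunck's interpolation result, Proposition \ref{Blunck}.) None of these ingredients --- the interpolation representation of UMD lattices, the semigroup transference, or the angle-reduction by interpolation --- appears in your proposal, so as written it does not constitute a proof.
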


This theorem is close in spirit to \cite{X}. Furthermore
it extends some of the results of \cite{CCL}.
Indeed it is shown in \cite[Proposition 5.2]{CCL} 
that a probability measure $\nu$ has BAR if and only if 
$C_{\nu,p}$ is a Ritt operator
for any $1<p<\infty$. Further \cite[Theorem 5.6]{CCL} says 
that if $X$ is an $L^p$-space for some $1<p<\infty$
and $\nu$ has BAR, then $S(\pi,\nu)$ is a Ritt 
operator for any bounded strongly continuous
representation $\pi\colon G\to B(X)$. $H^\infty$ functional 
calculus is not discussed in \cite{CCL}.

For the proof of Theorem \ref{UMD-BL},
will use complex interpolation, for which we refer to \cite{BL,KMS}. Given any
compatible couple $(X_0,X_1)$ of Banach spaces and any $\theta\in [0,1]$,
we let $[X_0,X_1]_\theta$ denote the interpolation space defined by \cite[Section 4]{KMS}.
The interpolation theorem ensures that if $T\colon X_0+X_1\to X_0+X_1$ is a
linear operator such that $T\colon X_0\to X_0$ and $T\colon X_1\to X_1$
boundedly, then $T\colon [X_0,X_1]_\theta\to [X_0,X_1]_\theta$ boundedly
for any $\theta\in [0,1]$. In the context of Ritt operators, we have the 
following.

\begin{proposition}\label{Blunck} (\cite{B})
Let $(X_0,X_1)$ be a compatible couple of Banach spaces and 
let $T\colon X_0+X_1\to X_0+X_1$ be a linear operator such that 
$T\colon X_0\to X_0$ is power bounded and $T\colon X_1\to X_1$
is a Ritt operator. Then for any $\theta\in (0,1]$,
$T\colon [X_0,X_1]_\theta\to [X_0,X_1]_\theta$ is a Ritt operator.
\end{proposition}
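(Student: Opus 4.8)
The plan is to verify, on the interpolation space $Y=[X_0,X_1]_\theta$ with $\theta\in(0,1)$ (the case $\theta=1$ being the hypothesis), the two conditions characterizing the Ritt property recalled above: that $A=I_X-T$ is sectorial of type $<\frac\pi2$ on $Y$, and that $\sigma_Y(T)\subseteq\Ddb\cup\{1\}$. Power boundedness of $T$ on $Y$ is immediate from the interpolation theorem applied to each $T^n$, since $\norm{T^n}_{X_0}\leq C_0$ and $\sup_n\norm{T^n}_{X_1}<\infty$ give a uniform bound on $\norm{T^n}_Y$. One should note at the outset that interpolating the Ritt differences directly is useless: from $\norm{T^{n-1}(I_X-T)}_{X_0}\leq 2C_0$ and $\norm{T^{n-1}(I_X-T)}_{X_1}\leq C/n$ one only obtains $\norm{T^{n-1}(I_X-T)}_Y\leq C'n^{-\theta}$, and a polynomial decay of order $\theta<1$ does not imply the Ritt estimate (diagonal operators whose eigenvalues approach $1$ tangentially satisfy such decay yet are not Ritt). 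The analytic information must therefore be produced through the bounded analytic semigroup generated by $-A$.

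This semigroup step is the crux. Since $T$ is bounded on $X_0+X_1$, so is $A=I_X-T$, and $e^{-zA}$ is an entire $B(X_0+X_1)$-valued function whose restrictions to $X_0,X_1,Y$ are the corresponding exponentials. Writing $e^{-tA}=e^{-t}\sum_k\frac{t^k}{k!}T^k$, power boundedness on $X_0$ gives $\norm{e^{-tA}}_{X_0}\leq C_0$ for all real $t\geq0$. On $X_1$, the Ritt hypothesis means (by the equivalence recalled above) that $A$ is sectorial of some type $\omega_1<\frac\pi2$, so $-A$ generates there a bounded analytic semigroup (see \cite{Haase,KW}): there is $\delta\in(0,\frac\pi2)$ with $M:=\sup_{z\in\Sigma_\delta}\norm{e^{-zA}}_{X_1}<\infty$. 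I would then interpolate the \emph{opening angle} by a Stein-type analytic-family argument. Fix $t$ with $\arg t=\theta\alpha$ for some $\alpha\in(0,\delta)$, and on the strip $\{0\leq\mathrm{Re}\,\zeta\leq1\}$ consider $\Phi_\zeta=e^{-u(\zeta)A}$ with $u(\zeta)=|t|\,e^{i\alpha\zeta}$. On $\mathrm{Re}\,\zeta=0$ one has $u(\zeta)>0$ real, so $\norm{\Phi_\zeta}_{X_0}\leq C_0$; on $\mathrm{Re}\,\zeta=1$ one has $\arg u(\zeta)=\alpha<\delta$, so $\norm{\Phi_\zeta}_{X_1}\leq M$; and the growth of $\norm{\Phi_\zeta}_{X_0+X_1}$ is of admissible order $\exp(c\,e^{\alpha|\mathrm{Im}\,\zeta|})$ with $\alpha<\pi$. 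The interpolation theorem for analytic families (\cite{BL}) then yields $\norm{e^{-tA}}_Y=\norm{\Phi_\theta}_Y\leq C_0^{1-\theta}M^\theta$. Letting $\alpha$ and $|t|$ vary bounds $e^{-tA}$ uniformly on the sector $\Sigma_{\theta\delta}$ and its conjugate, so $-A$ generates a bounded analytic semigroup on $Y$ and $A$ is sectorial of type $\leq\frac\pi2-\theta\delta<\frac\pi2$.

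It remains to place the spectrum. Power boundedness gives $\sigma_Y(T)\subseteq\overline{\Ddb}$, so I only need that no boundary point $e^{i\phi_0}$ with $\phi_0\neq0$ lies in $\sigma_Y(T)$. Here I would cross the unit circle by a Neumann series. Since $\sigma_{X_1}(T)\subseteq\overline{B_\gamma}$ meets $\partial\Ddb$ only at $1$, there is a neighbourhood of $e^{i\phi_0}$ on which the $X_1$-resolvent is bounded by some $M_1$; and for $|z|>1$ power boundedness on $X_0$ gives $\norm{(z-T)^{-1}}_{X_0}\leq C_0/(|z|-1)$. For $z_1=(1+\varepsilon)e^{i\phi_0}$ with $\varepsilon$ small, $z_1$ lies in the resolvent set of both endpoints, so $z_1\in\rho_Y(T)$ and, interpolating the common resolvent, $\norm{(z_1-T)^{-1}}_Y\leq(C_0/\varepsilon)^{1-\theta}M_1^\theta$. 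The Neumann series then shows that the disc of radius $\gtrsim\varepsilon^{1-\theta}$ about $z_1$ lies in $\rho_Y(T)$; since for $\varepsilon$ small this radius exceeds the distance $\varepsilon=\mathrm{dist}(z_1,e^{i\phi_0})$, the disc contains $e^{i\phi_0}$, whence $e^{i\phi_0}\in\rho_Y(T)$.

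Combining the two halves, $A=I_X-T$ is sectorial of type $<\frac\pi2$ on $Y$ and $\sigma_Y(T)\subseteq\Ddb\cup\{1\}$, so by the equivalence between the Ritt condition and sectoriality recalled in Section \ref{Ritt}, $T$ is a Ritt operator on $[X_0,X_1]_\theta$. The only genuinely delicate point is the angle-interpolation of the analytic semigroup in the second paragraph; once that analyticity is in hand, the power-boundedness and spectral steps are soft.
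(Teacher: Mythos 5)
The paper does not actually prove Proposition \ref{Blunck}: it cites Blunck \cite{B} for the case of $L^p$-couples and asserts that the argument carries over. Your proposal therefore has to stand on its own, and it does: it is a complete and correct proof, organized around the same two ingredients as Blunck's — namely the equivalence (recalled in Section \ref{Ritt}) of the Ritt property with sectoriality of $I-T$ of type $<\frac{\pi}{2}$ \emph{together with} the spectral inclusion $\sigma(T)\subset\Ddb\cup\{1\}$. Your Stein interpolation of the analytic family $e^{-|t|e^{i\alpha\zeta}(I-T)}$ correctly transfers a sector of analyticity of aperture $\theta\delta$ to $[X_0,X_1]_\theta$, and the growth $\exp\bigl(c\,e^{\alpha|\mathrm{Im}\,\zeta|}\bigr)$ with $\alpha<\frac{\pi}{2}<\pi$ is indeed admissible since $I-T$ is bounded on $X_0+X_1$. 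The spectral step is also right, and the decisive use of $\theta>0$ (a resolvent ball of radius $\gtrsim\varepsilon^{1-\theta}$ around $(1+\varepsilon)e^{i\phi_0}$ swallowing the point $e^{i\phi_0}$ at distance $\varepsilon$) is exactly the standard mechanism; the endpoint resolvents are consistent because for $|z_1|>1$ both are given by the same Neumann series $\sum_n z_1^{-n-1}T^n$. Your preliminary observation that naive interpolation of $n\norm{T^n-T^{n-1}}$ only yields decay $n^{-\theta}$ correctly explains why the statement is not a formal consequence of the interpolation theorem.

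Two small points to tighten. First, the analytic-families interpolation theorem in \cite{BL} is formulated for $L^p$-couples; for a general compatible couple you should invoke the abstract Stein interpolation theorem (Cwikel--Janson, or Voigt), whose hypotheses are met here. Second, for $\theta=1$ the space $[X_0,X_1]_1$ is in general only the closure of $X_0\cap X_1$ in $X_1$, not $X_1$ itself; this is harmless, since it is a closed $T$-invariant subspace of $X_1$ and the defining estimates $\sup_n\norm{T^n}<\infty$ and $\sup_n n\norm{T^n-T^{n-1}}<\infty$ pass to invariant subspaces, but it is not literally ``the hypothesis.''
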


This result was established by Blunck \cite{B} in the case when $X_0,X_1$
are $L^p$-spaces. However the proof works as well in the more general setting
of interpolation couples so we omit it.

\begin{proof}[Proof of Theorem \ref{UMD-BL}]
Let $\nu\in M(G)$ be a probability measure with BAR. 
According to Definition \ref{H} and Lemma \ref{H-Ritt}, 
it suffices to show the existence
of $\gamma\in (0,\frac{\pi}{2})$ and $C\geq 1$ such that
$$
\forall\,\varphi\in\P,\qquad 
\norm{\varphi(S(\pi,\nu))}\leq C\sup
\bigl\{\vert\varphi(z)\vert\, :\, z\in B_\gamma\bigr\}.
$$
By (\ref{Pol-nu}) and Proposition \ref{Transfer}, we have
$$
\norm{\varphi(S(\pi,\nu))}\leq \norm{\pi}^2\norm{\varphi(C_{\nu,2}^X)}
$$
for any $\varphi\in\P$. Hence it suffices to show that 
$T=C_{\nu,2}^X$ satisfies (\ref{FC}) for some $\gamma\in (0,\frac{\pi}{2})$.

Since $X$ is a UMD Banach lattice, it follows from \cite{RF}  that 
there exist a compatible couple $(Y,H)$ and some $\theta\in(0,1]$
such that $H$ is a Hilbert space, 
$Y$ is a UMD Banach space and 
$X=[Y,H]_\theta$ isometrically.
By \cite[Theorem 5.1.2]{BL}, this implies that 
\begin{equation}\label{Inter}
L^2(G;X) =[L^2(G;Y),L^2(G;H)]_\theta\qquad \hbox{isometrically}.
\end{equation}

By  Lemma \ref{Elem}, $C_{\nu,2}^H$ is a Ritt operator.
Further $C_{\nu,2}^Y$ is a contraction hence it follows from
(\ref{Inter}) and Lemma \ref{Blunck} that $C_{\nu,2}^X$ is a Ritt operator.

Now we set 
$$
A^X= I_{L^2(G;X)} - C_{\nu,2}^X
$$
and we similarly define
$A^Y$ on $L^2(G;Y)$ and $A^H$ on $L^2(G;H)$ . 
Then we consider
$$
T_t = e^{-t}e^{t C_{\nu,2}},\qquad t\geq 0.
$$
It is plain that $(T_t)_{t\geq 0}$ is $c_0$-semigroup of contractions
on $L^2(G)$. Its negative generator is equal to  $I_{L^2} - C_{\nu,2}$. 
Moreover $T_t$ is a positive operator (in the lattice sense)
for any $t\geq 0$. Since 
$Y$ is UMD, \cite[Theorem 6]{HP} asserts that the negative 
generator of $\bigl(T_t\overline{\otimes} I_Y\bigr)_{t\geq 0}$
has a bounded $H^\infty(\Sigma_\alpha)$ functional calculus
for any $\alpha\in(\frac{\pi}{2},\pi)$. By construction, this 
negative generator is  $(I_{L^2} - C_{\nu,2})\overline{\otimes} I_Y$,
which is equal to $A^Y$.

On the other hand, $C_{\nu,2}^H$ has a bounded
$H^\infty$ functional calculus by Lemma \ref{Elem}
hence by Theorem \ref{S-R}, there exists $\beta<\frac{\pi}{2}$
such that $A^H$ admits a
bounded $H^\infty(\Sigma_\beta)$ functional calculus.
Applying the interpolation theorem
for $H^\infty$ functional calculus \cite[proposition 4.9]{KKW}, we deduce
that $A^X$ admits a bounded $H^\infty(\Sigma_{\theta\beta +(1-\theta)\alpha})$ 
functional calculus for any $\alpha\in(\frac{\pi}{2},\pi)$. Choosing
$\alpha$ sufficiently close to $\frac{\pi}{2}$ we obtain that 
$A^X$ admits a bounded $H^\infty(\Sigma_{\rho})$ functional calculus
for some $\rho<\frac{\pi}{2}$. By Theorem \ref{S-R} we finally
obtain that $C_{\nu,2}^X$ has a bounded $H^\infty$ functional calculus.
\end{proof}

In the next corollary we focus on
the simple case $G=\Zdb$.

\begin{corollary} Let $(c_k)_{k\in\footnotesize{\Zdb}}$
be a sequence on nonnegative real numbers such that
$\sum_{k=-\infty}^\infty c_k=1$ and there exists a 
constant $K\geq 1$ such that
$$
\Bigl\vert 1 - \sum_{k=-\infty}^\infty c_k e^{ik\theta}
\Bigr\vert\, \leq\,
K\Bigl(1- \Bigl\vert\sum_{k=-\infty}^\infty c_k e^{ik\theta}\Bigr\vert\Bigr),\qquad
\theta\in\Rdb.
$$
\begin{itemize}
\item [(1)] Let $X$ be a UMD Banach lattice and let 
$U\colon X\to X$ be an invertible operator such that 
$$
\sup_{k\in\footnotesize{\Zdb}}\norm{U^k} \, <\infty\,.
$$
Then 
$$
V=\sum_{k=-\infty}^\infty c_k U^k
$$
is a Ritt 
operator with a bounded $H^\infty$ functional calculus.

\item[(2)]
Assume that $c_k =0$ for any $k\leq -1$.
Let $(\Omega,\mu)$ be a measure space, let $1<p<\infty$ and let 
$T\colon L^p(\Omega)\to L^p(\Omega)$ be a positive operator with
$\norm{T}\leq 1$. Then 
$$
S=\sum_{k=0}^\infty c_k T^k
$$
is a Ritt 
operator with a bounded $H^\infty$ functional calculus.
\end{itemize}
\end{corollary}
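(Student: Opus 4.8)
The plan is to recognize that both parts are the special case $G=\Zdb$ of the theory developed above, with part (2) reduced to part (1) by dilation.

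For part (1), I would simply reinterpret the data as a $\Zdb$-representation. Since $U$ is invertible with $\sup_{k\in\Zdb}\norm{U^k}<\infty$, the map $\pi\colon\Zdb\to B(X)$ given by $\pi(k)=U^k$ is a bounded representation, and it is automatically strongly continuous because $\Zdb$ is discrete. The sequence $\nu=(c_k)_{k\in\Zdb}$ is a probability measure on $\Zdb$ whose Fourier transform, under the identification $\widehat{\Zdb}\cong\Tdb$ with characters $\theta\mapsto e^{ik\theta}$, is $\widehat{\nu}(\theta)=\sum_{k}c_k e^{ik\theta}$. The displayed hypothesis is then exactly the BAR condition $|1-\widehat{\nu}|\leq K(1-|\widehat{\nu}|)$. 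As $V=\sum_k c_k U^k=S(\pi,\nu)$ and $X$ is a UMD Banach lattice, Theorem \ref{UMD-BL} immediately yields that $V$ is a Ritt operator with a bounded $H^\infty$ functional calculus.

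For part (2) the obstruction is that $T$ need not be invertible, so only the discrete semigroup $\Ndb$ acts and there is no $\Zdb$-representation to feed into part (1). I would remove this by dilation. By Akcoglu's dilation theorem for positive contractions on $L^p$ with $1<p<\infty$, there exist a measure space $\widetilde\Omega$, a positive isometric embedding $J\colon L^p(\Omega)\to L^p(\widetilde\Omega)$, a positive contraction $Q\colon L^p(\widetilde\Omega)\to L^p(\Omega)$ with $QJ=I$, and a positive invertible isometry $U$ on $L^p(\widetilde\Omega)$ such that $T^n=Q\,U^n\,J$ for all $n\geq 0$. Since $c_k=0$ for $k\leq-1$, the measure $\nu=(c_k)_{k\geq 0}$ is supported on $\Ndb$ and still satisfies the same BAR hypothesis.

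Now $L^p(\widetilde\Omega)$ is again a UMD Banach lattice and $U$ is an invertible isometry, so $\sup_k\norm{U^k}=1$; applying part (1) to $U$, the operator $V=\sum_{k\geq 0}c_k U^k$ is a Ritt operator with a bounded $H^\infty(B_\gamma)$ functional calculus for some $\gamma\in(0,\tfrac{\pi}{2})$, say with constant $C$. To transfer this to $S$, I would expand $S^n$ over monomials, using that the powers of $T$ commute and that $T^m=Q\,U^m\,J$:
\[
S^n=\sum_{k_1,\dots,k_n\geq 0}c_{k_1}\cdots c_{k_n}\,T^{k_1+\cdots+k_n}=Q\Bigl(\sum_{k_1,\dots,k_n\geq 0}c_{k_1}\cdots c_{k_n}\,U^{k_1+\cdots+k_n}\Bigr)J=Q\,V^n\,J,
\]
since the powers of $U$ multiply and resum to $V^n$. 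Hence $\varphi(S)=Q\,\varphi(V)\,J$ for every $\varphi\in\P$, and as $J$ and $Q$ are contractions we get
\[
\norm{\varphi(S)}\leq\norm{\varphi(V)}\leq C\sup\bigl\{|\varphi(z)|\,:\,z\in B_\gamma\bigr\}.
\]
Thus $S$ satisfies (\ref{FC}) and is, by Lemma \ref{H-Ritt} and Definition \ref{H}, a Ritt operator with a bounded $H^\infty$ functional calculus.

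The only genuinely non-formal ingredient is the dilation: part (1) is a direct citation of Theorem \ref{UMD-BL}, and the passage from $V$ to $S$ is purely algebraic once the compression formula $S^n=Q\,V^n\,J$ is available. The crux is therefore to invoke the correct dilation of a positive $L^p$-contraction to a positive invertible isometry, which is precisely where the hypotheses $1<p<\infty$, positivity of $T$, and $\norm{T}\leq 1$ are used. I would take particular care that the identity $T^n=Q\,U^n\,J$ holds \emph{simultaneously} for all $n\geq 0$ with one fixed triple $(J,Q,U)$, since the resummation yielding $S^n=Q\,V^n\,J$ depends on this single dilation rather than on separate dilations for each power.
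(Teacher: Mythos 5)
Your proposal is correct and follows essentially the same route as the paper: part (1) is the $G=\Zdb$ instance of Theorem \ref{UMD-BL}, and part (2) uses the Akcoglu--Sucheston dilation $T^k=QU^kJ$ to get $S^n=QV^nJ$ (the paper phrases the resummation via convolution powers $c(n)=c(1)\ast\cdots\ast c(1)$, which is the same computation) and then transfers the $H^\infty(B_\gamma)$ bound from $V$ to $S$.
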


\begin{proof}
Part (1) is the translation of Theorem \ref{UMD-BL}  in the case $G=\Zdb$.

To prove part (2), we apply the Akcoglu-Sucheston dilation theorem
for positive contractions \cite{AS}: there exist a measure space
$(\Omega',\mu')$, a surjective isometry $U\colon 
L^p(\Omega')\to L^p(\Omega')$ and two contractions
$J\colon L^p(\Omega)\to L^p(\Omega')$ and 
$Q\colon L^p(\Omega')\to L^p(\Omega)$ such that 
$T^k =QU^kJ$ for any $k\geq 0$.

Let $S=\sum_{k\geq 0} c_k T^k$ and $V=\sum_{k\geq 0} c_k U^k$. 
For any integer $n\geq 1$, we have
$$
S^n = \sum_{k\geq 0} c(n)_k T^k
\qquad\hbox{and}\qquad
V^n=\sum_{k\geq 0} c(n)_k U^k,
$$
where $c(n)\in\ell^1$ is defined by $c(1)=(c_k)_{k\geq 0}$
and $c(n)=c(1)\ast\cdots \ast c(1)$ ($n$ times). Then
the above dilation property implies that 
$S^n =QV^nJ$ for any $n\geq 0$.
Thus $\varphi(S)= Q\varphi(V)J$ for any $\varphi\in\P$ and hence
$$
\forall\,\varphi\in \P,\qquad \norm{\varphi(S)}\leq \norm{\varphi(V)}.
$$
The operator $V$ has a bounded $H^\infty$ functional calculus
by part (1). By the above inequality, $S$ also has 
a bounded $H^\infty$ functional calculus.
\end{proof}

\begin{remark} 
Let $\nu\in M(G)$ be a probability measure with BAR and let
$1<p<\infty$. According to Theorem \ref{UMD-BL}, $\C_{\nu,p}$ is a Ritt operator
with a bounded $H^\infty$ functional calculus.

Let $X$ be an $SQ_p$ space, that is, a quotient 
of a subspace of an $L^p$-space. For any $T$ in $B(L^p(G))$,
$T\otimes I_X$ extends to a bounded
operator on $L^p(G;X)$ with $\norm{T\overline{\otimes} I_X}=\norm{T}$.
This implies that 
$$
\forall\, \varphi\in \P, \qquad
\norm{\varphi({C_{\nu,p}^X})}= \norm{\varphi({C_{\nu,p}})}.
$$
The proof of Theorem \ref{UMD-BL} therefore shows that 
for any bounded strongly continuous representation
$\pi\colon G\to B(X)$, $S(\pi,\nu)$ is a Ritt operator
with a bounded $H^\infty$ functional calculus.

It would be interesting to characterize the class of all Banach spaces 
$X$ with this property. 
\end{remark}

\section{Subordination on $K$-convex spaces}\label{K}

Let $\Omega_0$ denote the compact group $\{-1,1\}^{\Ndb}$ equipped
with its normalized Haar measure and for any $n\geq 1$, let 
$\varepsilon_n\colon \Omega_0\to\Rdb$ denote the Rademacher function
defined by setting $\varepsilon_n(\Theta)=\theta_n$ for any
$\Theta=(\theta_i)_{i\geq 1}\in\Omega_0$. Let ${\rm Rad}_2\subset L^2(\Omega_0)$
denote the closed linear span of the $\varepsilon_n$ and 
let $Q\colon L^2(\Omega_0)\to L^2(\Omega_0)$ be the orthogonal projection 
with range equal to ${\rm Rad}_2$.

A Banach space $X$ is called $K$-convex if $Q\otimes I_X$ extends to 
a bounded operator on $L^2(\Omega_0;X)$. In this case, we let 
$$
K_X=\bignorm{Q\overline{\otimes} I_X\colon L^2(\Omega_0;X)\longrightarrow 
L^2(\Omega_0;X)}.
$$
This number is called the $K$-convexity constant of $X$. 
This property plays a fundamental role in
Banach space theory, in relation with the notions of type and cotype. 
In his fundamental paper \cite{Pis}, Pisier
showed that $X$ is $K$-convex if and only if it admits a non trivial Rademacher type.
We refer to \cite{M, Pis, Pis1} for more information on these topics.

$L^p$-spaces are $K$-convex whenever $1<p<\infty$. 
More generally we have the following
well-known fact.

\begin{lemma}\label{Unif}
Let $1<p<\infty$ and let $X$ be a 
$K$-convex Banach space. There exists a constant $C>0$ such that
whenever $(\Omega,\mu)$ is a measure space, the space 
$L^p(\Omega;X)$ is $K$-convex and $K_{L^p(\Omega;X)}\leq C$.
\end{lemma}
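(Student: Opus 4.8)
The plan is to establish the uniform estimate $K_{L^p(\Omega;X)}\leq C$ with $C$ depending only on $p$ and on $K_X$ (crucially \emph{not} on $(\Omega,\mu)$), by treating the range $p\geq 2$ through a direct Fubini computation and then reducing the range $1<p<2$ to it by duality.

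First I would fix $E=L^p(\Omega;X)$ and, working on the dense subspace $L^2(\Omega_0)\otimes E$ of $L^2(\Omega_0;E)$, estimate $\norm{(Q\overline{\otimes}I_E)f}_{L^2(\Omega_0;E)}$. Writing $x_n=\int_{\Omega_0}\varepsilon_n f$ for the (finitely many nonzero) Rademacher coefficients of $f$ and, for $s\in\Omega$, $f_s\in L^2(\Omega_0;X)$ for the slice $f_s(\omega)=f(\omega)(s)$, one has $(Q\overline{\otimes}I_E)f=\sum_n\varepsilon_n\otimes x_n$, while for each fixed $s$ the inner Rademacher series $\sum_n\varepsilon_n x_n(s)$ is exactly $(Q\overline{\otimes}I_X)(f_s)$. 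Assuming now $p\geq 2$, I would run the chain of estimates: (i) $\norm{\sum_n\varepsilon_n x_n}_{L^2(\Omega_0;E)}\leq\norm{\sum_n\varepsilon_n x_n}_{L^p(\Omega_0;E)}$ since $\Omega_0$ is a probability space; (ii) the isometric Fubini identification $L^p(\Omega_0;L^p(\Omega;X))=L^p(\Omega;L^p(\Omega_0;X))$; (iii) Kahane's inequality applied pointwise in $s$, giving $\norm{(Q\overline{\otimes}I_X)(f_s)}_{L^p(\Omega_0;X)}\leq\kappa_p\norm{(Q\overline{\otimes}I_X)(f_s)}_{L^2(\Omega_0;X)}$ with $\kappa_p$ depending only on $p$; (iv) the $K$-convexity of $X$, $\norm{(Q\overline{\otimes}I_X)(f_s)}_{L^2(\Omega_0;X)}\leq K_X\norm{f_s}_{L^2(\Omega_0;X)}$; and (v) Minkowski's integral inequality $\norm{f}_{L^p(\Omega;L^2(\Omega_0;X))}\leq\norm{f}_{L^2(\Omega_0;L^p(\Omega;X))}$, which is valid precisely because $p\geq 2$. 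Together these give $K_{L^p(\Omega;X)}\leq\kappa_p K_X$, uniformly in $(\Omega,\mu)$.

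For $1<p<2$ I would argue by duality, using that $K$-convexity is self-dual. Since $Q=Q^*$ on $L^2(\Omega_0)$, restricting $(Q\overline{\otimes}I_X)^*$ to the isometrically embedded norming subspace $L^2(\Omega_0;X^*)$ of $(L^2(\Omega_0;X))^*$ yields $Q\overline{\otimes}I_{X^*}$; hence $X^*$ is $K$-convex with $K_{X^*}\leq K_X$. Carrying out the same computation at the level of $E=L^p(\Omega;X)$: the space $F=L^{p'}(\Omega;X^*)$ embeds isometrically and norming into $E^*$, and for finite sums $\langle(Q\overline{\otimes}I_E)f,g\rangle=\langle f,(Q\overline{\otimes}I_F)g\rangle$; taking the supremum over $g$ in the unit ball of $L^2(\Omega_0;F)$ (which norms $L^2(\Omega_0;E)$) gives $K_{L^p(\Omega;X)}\leq K_{L^{p'}(\Omega;X^*)}$. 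As $p'>2$ and $X^*$ is $K$-convex, the first step applies and yields $K_{L^{p'}(\Omega;X^*)}\leq\kappa_{p'}K_{X^*}\leq\kappa_{p'}K_X$. Thus $C=\kappa_{\max(p,p')}K_X$ works in all cases.

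The main obstacle is the mismatch between the exponent $2$ carried by the Rademacher projection and the exponent $p$ of the ambient space: bridging them forces the passage $L^2(\Omega_0)\to L^p(\Omega_0)$ through Kahane's inequality followed by a Fubini swap, after which Minkowski's integral inequality is needed — and that inequality only points in the useful direction when $p\geq 2$. This asymmetry is exactly what makes the duality detour indispensable for $1<p<2$, and the point to be careful about there is to use the norming embedding $L^{p'}(\Omega;X^*)\hookrightarrow(L^p(\Omega;X))^*$ rather than an identification, so that no Radon--Nikodym hypothesis on $X^*$ is required. Throughout, one must check that the Kahane constants $\kappa_p$ depend on $p$ alone and that no estimate involves $(\Omega,\mu)$, which is the whole content of the asserted uniformity.
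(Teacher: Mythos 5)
Your proof is correct, but it is organized quite differently from the paper's. The paper's proof also rests on the Khintchine--Kahane inequalities and a Fubini identification, but it uses them in a different order: it first replaces the $L^2(\Omega_0)$-based projection $Q$ by its $L^p(\Omega_0)$-realization $Q_p$, so that for \emph{every} Banach space $Y$ the quantity $\bignorm{Q_p\overline{\otimes} I_Y}_{B(L^p(\Omega_0;Y))}$ is equivalent to $K_Y$ up to constants $A_p,B_p$ depending only on $p$; then, since $L^p(\Omega_0;L^p(\Omega;X))=L^p(\Omega;L^p(\Omega_0;X))$ isometrically and $Q_p\otimes I$ acts fiberwise over $\Omega$, one gets the \emph{exact} identity $\norm{Q_p\overline{\otimes} I_{L^p(\Omega;X)}}=\norm{Q_p\overline{\otimes} I_X}$, and the lemma follows for all $1<p<\infty$ in one stroke with $C=A_p^{-1}B_pK_X$. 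By contrast, you keep the exponent $2$ attached to the Rademacher projection throughout, which forces you to bridge $L^2(\Omega_0)$ and $L^p(\Omega_0)$ with one-sided inequalities (Jensen one way, Kahane the other) and to invoke Minkowski's integral inequality for the Fubini swap; since Minkowski only points the right way for $p\geq 2$, you then need the duality detour through the norming embedding $L^{p'}(\Omega;X^*)\hookrightarrow (L^p(\Omega;X))^*$ to cover $1<p<2$. Both arguments are sound and yield a constant depending only on $p$ and $K_X$; the paper's choice of working at exponent $p$ from the start is what lets it dispense with Minkowski, the case split, and the duality step. Your care in using a norming embedding rather than an identification of the dual (so as to avoid any Radon--Nikodym hypothesis on $X^*$) is well placed; note also that $K$-convex spaces are reflexive, which would let you shortcut some of those dual-space considerations if you preferred.
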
 

\begin{proof}
It follows from the Khintchine-Kahane inequalities (see e.g. \cite[Theorem 1.e.13]{LT2})
that $Q$ extends to a bounded projection 
$Q_p\colon L^p(\Omega_0)\to L^p(\Omega_0)$, that $X$ is $K$-convex if and only
if $Q_p\otimes I_X$ extends to a bounded operator on $L^p(\Omega_0;X)$, and
that in this case,
$$
A_p K_X\leq 
\bignorm{Q_p\overline{\otimes} I_X\colon L^p(\Omega_0;X)\longrightarrow 
L^p(\Omega_0;X)}\leq B_p K_X
$$
for some universal constants $0<A_p<B_p$.
By Fubini's Theorem, the boundedness of $Q_p\otimes I_X$ on $L^p(\Omega_0;X)$
implies that $Q_p\otimes I_{L^p(\Omega;X)}$ is bounded
on $L^p(\Omega_0;L^p(\Omega;X))$, with
$$ 
\bignorm{Q_p\overline{\otimes} I_{L^p(\Omega;X)}\colon 
L^p(\Omega_0;L^p(\Omega;X))\to L^p(\Omega_0;L^p(\Omega;X))}
= \bignorm{Q_p\overline{\otimes} I_X\colon L^p(\Omega_0;X)\to
L^p(\Omega_0;X)}.
$$
The result follows at once.
\end{proof}

Let $\nu\in M(G)$ be a probability measure. We say that $\nu$
is symmetric provided that $\nu(-V)=\nu(V)$ for any measurable
$V\subset G$. It is clear that $\nu$
is symmetric if and only if $\widehat{\nu}$ is real valued.
In this case, $\widehat{\nu}$ is actually valued in $[-1,1]$.

We will say that such a measure $\nu$ is a square if there exists another 
symmetric probability measure $\eta$ such that
$$
\nu=\eta *\eta.
$$
In this case, $\widehat{\nu} =\widehat{\eta}^2$ is valued in $[0,1]$,
hence $\nu$ has BAR.

Our main result is the following.

\begin{theorem}\label{main}
Let $\nu\in M(G)$ be a symmetric probability measure. Assume that 
$\nu$ is a square and 
let $X$ be a $K$-convex Banach space.
\begin{itemize}
\item [(1)] For any $1<p<\infty$, the convolution operator
$C_{\nu,p}^X = \nu\ast\,\cdotp\,
\colon L^p(G;X)\to L^p(G;X)$ is a Ritt 
operator.
\item [(2)]
For any bounded strongly continuous 
representation $\pi\colon G\to B(X)$, $S(\pi,\nu)$ is a Ritt 
operator.
\end{itemize}
\end{theorem}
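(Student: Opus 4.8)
The plan is to prove (1) first and then deduce (2) by transference. For (2), observe that by the convolution identity $S(\pi,\mu_2)S(\pi,\mu_1)=S(\pi,\mu_2\ast\mu_1)$ together with (\ref{Pol-nu}), for every $\varphi\in\P$ one has $\varphi(S(\pi,\nu))=S(\pi,\nu_\varphi)$ and $\varphi(C_{\nu,2}^X)=C_{\nu_\varphi,2}^X$. Proposition \ref{Transfer} then yields
$$
\norm{\varphi(S(\pi,\nu))}=\norm{S(\pi,\nu_\varphi)}\leq \norm{\pi}^2\,\norm{C_{\nu_\varphi,2}^X}=\norm{\pi}^2\,\norm{\varphi(C_{\nu,2}^X)}.
$$
Applying this to $\varphi(z)=n(z^n-z^{n-1})$ and using that $C_{\nu,2}^X$ is a Ritt operator (which is part (1) with $p=2$, valid since $X$ is $K$-convex) gives the Ritt estimate (\ref{C1}) for $S(\pi,\nu)$; power boundedness is clear since $\norm{S(\pi,\nu)^n}=\norm{S(\pi,\nu^{\ast n})}\leq\norm{\pi}$. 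Thus (2) reduces to (1).

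For (1), write $R=C_{\eta,p}^X$. The same convolution identity gives $C_{\nu,p}^X=(C_{\eta,p}^X)^2=R^2$, and $R$ is a contraction on $L^p(G;X)$ since $\eta$ is a probability measure. By Lemma \ref{Unif} the space $L^p(G;X)$ is again $K$-convex, with constant controlled by $K_X$. As $R^2$ is a contraction it is power bounded, so it remains to establish (\ref{C1}), namely
$$
\sup_{n\geq1}\,n\,\bignorm{(R^2)^n-(R^2)^{n-1}}=\sup_{n\geq1}\bignorm{\varphi_n(R)}<\infty,\qquad \varphi_n(t)=n\bigl(t^{2n}-t^{2n-2}\bigr).
$$
Because $\varphi_n$ is a polynomial and $C_{\eta,p}^X=C_{\eta,p}\overline{\otimes}I_X$ by (\ref{Tensor}), the operator $\varphi_n(R)$ equals $\varphi_n(C_{\eta,p})\overline{\otimes}I_X$, so the whole matter is reduced to a uniform bound on the functional calculus of the single scalar convolution operator $C_{\eta,p}$ applied to the even polynomials $\varphi_n$.

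The scalar case is immediate: $\eta$ symmetric forces $\widehat{\eta}$ to be real valued in $[-1,1]$, so $C_{\eta,2}$ is a self-adjoint contraction on $L^2(G)$ with $\sigma(C_{\eta,2})\subset[-1,1]$; by the spectral theorem and the elementary inequality $n u^{n-1}(1-u)\leq(1-\tfrac1n)^{n-1}\leq1$ with $u=t^2$, one gets $\norm{\varphi_n(C_{\eta,2})}\leq1$ for all $n$. The entire difficulty is to transfer this bound to $L^p(G;X)$, and this is where $K$-convexity must enter. Here the plan is to use the symmetry of $\eta$ to randomize signs: writing $C_{\eta,p}=\int_G \tfrac12\bigl(\lambda_p(y)+\lambda_p(-y)\bigr)\,d\eta(y)$ and iterating, $R^{2m}$ is represented as an average over $2m$ independent Rademacher choices of products of the commuting isometries $\lambda_p^X(\pm y_j)$. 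The Walsh/Rademacher structure of this average, combined with the boundedness of the Rademacher projection $Q\overline{\otimes}I_{L^p(G;X)}$ (i.e. the $K$-convexity of $L^p(G;X)$), should convert the scalar spectral estimate into the desired uniform vector-valued bound, with a constant depending only on $K_X$ and $p$ but not on $n$.

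I expect this last step to be the crux and the main obstacle: it is precisely the discrete analogue of Pisier's analyticity estimate for symmetric convolution semigroups, and making the passage from the self-adjoint scalar bound to the $K$-convex vector-valued bound quantitative—controlling the relevant operator-valued Rademacher sum uniformly in $n$ solely through the Rademacher projection—is the delicate part. The symmetry hypothesis (so that $\widehat{\eta}$ is real) and the square hypothesis (so that $\widehat{\nu}=\widehat{\eta}^2\in[0,1]$ and the symbols $\varphi_n$ are even and uniformly bounded on $[-1,1]$) are exactly what make both the scalar estimate and the sign-randomization available, and one should expect $K$-convexity to be unavoidable here, consistent with the counterexamples announced for Section \ref{K}.
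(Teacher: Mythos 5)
Your reduction of (2) to (1) via (\ref{Pol-nu}) and Proposition \ref{Transfer} is exactly what the paper does, and your scalar observations (that $C_{\nu,p}^X=(C_{\eta,p}^X)^2$, that $C_{\eta,2}$ is a selfadjoint contraction, and that the even polynomials $\varphi_n(t)=n(t^{2n}-t^{2n-2})$ are uniformly bounded on $[-1,1]$ by the spectral theorem) are correct. But the proof stops precisely where the theorem lives. You yourself identify the passage from the scalar $L^2$ spectral bound to the uniform bound for $\varphi_n(C_{\eta,p})\overline{\otimes}I_X$ on $L^p(G;X)$ as ``the crux and the main obstacle,'' and then you only gesture at a mechanism (``randomize signs\ldots the Rademacher structure\ldots should convert the scalar estimate into the vector-valued bound''). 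That mechanism is not an argument: representing $R^{2m}$ as an average of products of translations $\lambda_p^X(\pm y_j)$ gives no handle on the factor $n$ in $n\|(R^2)^n-(R^2)^{n-1}\|$, and there is no general principle transferring a functional-calculus bound for a selfadjoint scalar convolution operator to its tensor extension --- indeed Theorem \ref{Pitt} shows this transfer genuinely fails when $X$ is not $K$-convex, so whatever closes the gap must use $K$-convexity in a quantitative and nontrivial way.

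For comparison, the paper closes this gap with three ingredients, none of which appear in your plan. First, it writes $-n(T^n-T^{n-1})=\sum_{j=1}^n T^{n-j}(I-T)T^{j-1}$ and proves a decoupling/transference lemma (Lemma \ref{lem}) showing that the norm of $\sum_j S_{nj}\cdots S_{1j}\overline{\otimes}I_X$ on $L^p(G;X)$ is dominated by that of $\sum_j S_{nj}\overline{\otimes}\cdots\overline{\otimes}S_{1j}\overline{\otimes}I_X$ on $L^p(G^n;X)$; this is where the sum of $n$ compositions is turned into a sum of $n$-fold tensor products. Second, it invokes Rota's dilation theorem --- this is exactly where the hypotheses that $\nu=\eta\ast\eta$ with $\eta$ symmetric (so $C_{\eta,2}$ is selfadjoint, positive, unital) are consumed --- to write $T=QEJ$ with $QJ=I$ and $E$ a conditional expectation, reducing the tensorized sum to $\sum_{k=1}^n(I-P_k)\prod_{j\neq k}P_j$ for commuting contractive projections $P_j$ built from $E$. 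Third, it applies Pisier's Theorem \ref{Pisier}, which bounds such sums uniformly on $K$-convex spaces (using Lemma \ref{Unif} to control $K_{L^p(\Omega^n;X)}$ uniformly in $n$). Your instinct that the argument should be the discrete analogue of Pisier's analyticity theorem and that $K$-convexity enters through the Rademacher projection is sound, but without the decomposition into a sum of $n$ terms, the decoupling lemma, and the dilation by commuting projections, the central estimate is not established, so the proposal as written does not constitute a proof of part (1).
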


If we think of Ritt operators as the discrete analogues of bounded 
analytic semigroups, part (1) of the above theorem should be regarded 
as a discrete version of \cite[Theorem 1.2]{Pis}. 
Its proof will make crucial use of the
following result of Pisier.

\begin{theorem}\label{Pisier}(\cite[Theorem 3.1]{Pis})
Let $Z$ be a $K$-convex Banach space. There exists a constant $C>0$ only depending
on the $K$-convexity constant $K_Z$ such that
for any integer $n\geq 1$ and for any $n$-tuple $(P_1,\ldots,P_n)$ of 
mutually commuting contractive projections on $Z$,
$$
\biggnorm{\sum_{k=1}^n(I_Z - P_k)\prod_{1\leq j\not=k\leq n} P_j}\,\leq C.
$$
\end{theorem}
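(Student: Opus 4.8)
The plan is to recast the operator as a spectral projection attached to a contraction semigroup, and then to extract its norm from the single scalar $K_Z=\norm{Q\overline{\otimes} I_Z}$. The first step is purely algebraic and uses only that the $P_k$ commute and are contractive. For $\epsilon=(\epsilon_1,\dots,\epsilon_n)\in\{0,1\}^n$ set $P^\epsilon=\prod_{k=1}^n P_k^{(\epsilon_k)}$, where $P_k^{(1)}=I_Z-P_k$ and $P_k^{(0)}=P_k$. Since the $P_k$ commute and $P_k(I_Z-P_k)=0$, the $P^\epsilon$ are mutually orthogonal idempotents with $\sum_\epsilon P^\epsilon=\prod_k\bigl(P_k+(I_Z-P_k)\bigr)=I_Z$, and expanding the product identifies the operator in the statement with the sum over the weight-one corners, $\sum_k(I_Z-P_k)\prod_{j\neq k}P_j=\sum_{|\epsilon|=1}P^\epsilon$. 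Writing $A=\sum_k(I_Z-P_k)$ one checks $AP^\epsilon=|\epsilon|\,P^\epsilon$, so $A=\sum_\epsilon|\epsilon|\,P^\epsilon$ and the target operator is exactly the spectral projection of $A$ onto the eigenvalue $1$.

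Second, I would exhibit the holomorphic structure. For $t\geq 0$ one has $e^{-tA}=\prod_k\bigl(e^{-t}I_Z+(1-e^{-t})P_k\bigr)$, a product of commuting convex combinations of $I_Z$ and $P_k$, hence a contraction; putting $s=1-e^{-t}\in[0,1)$ each factor reads $(1-s)I_Z+sP_k$. Equivalently, writing $g(z)=\prod_k\bigl(zI_Z+(1-z)P_k\bigr)$, a $B(Z)$-valued polynomial with $\norm{g(z)}\leq 1$ for $z\in[0,1]$, $g(0)=\prod_kP_k$ and $g(1)=I_Z$, a direct differentiation gives $\sum_k(I_Z-P_k)\prod_{j\neq k}P_j=g'(0)$; since $e^{-tA}=g(e^{-t})$, also $g'(0)=\lim_{t\to\infty}e^{t}Ae^{-tA}$. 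Thus bounding the operator is the same as proving the analyticity estimate $\sup_{t\geq 0}\norm{e^{t}Ae^{-tA}}\leq C$ for the contraction semigroup $(e^{-tA})_{t\geq 0}$, with $C$ independent of $n$ and of the particular family $(P_k)$.

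The crux — and the one place where $K$-convexity is indispensable — is to make this bound dimension-free. I would use $K$-convexity in its sharp form: the degree-one (Rademacher) projection $Q\overline{\otimes} I_Z$ is bounded on $L^2(\Omega_0;Z)$, and by compression the corresponding degree-one projection on each finite cube $\{-1,1\}^n$ is bounded with constant $\leq K_Z$. The strategy is to represent $g$ (equivalently $e^{-tA}$) as a Walsh average over $\{-1,1\}^n$ of \emph{contractions} built from the $P_k$, so that $g'(0)$ appears as the first Walsh level of that average; applying $Q\overline{\otimes} I_Z$ then extracts precisely $g'(0)$ and controls it by $K_Z$ times the uniform bound on those contractions, i.e.\ by a constant depending only on $K_Z$.

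The difficulty to overcome is that the two naive executions each lose a factor growing with $n$: a plain Cauchy estimate of $g'(0)$ fails because $\norm{g(z)}$ grows like $e^{cn}$ off the segment $[0,1]$, while a crude extraction of the \emph{plain} sum $\sum_{|\epsilon|=1}P^\epsilon$ by pairing the Rademacher average against $\sum_j\varepsilon_j$ costs a factor $\sqrt{n}$. Pisier's device is to combine the contractivity of the \emph{whole} semigroup with the boundedness of the vector-valued Gaussian/Rademacher projection (and its dual, where the equal constant for $Z^*$ enters) so that neither loss occurs; this is the technical heart and the step I expect to be hardest. Once the dimension-free estimate $\sup_{t}\norm{e^{t}Ae^{-tA}}\leq C(K_Z)$ is established, the spectral-projection identity of the first step yields $\bignorm{\sum_k(I_Z-P_k)\prod_{j\neq k}P_j}\leq C(K_Z)$, as required.
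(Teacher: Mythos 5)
First, a point of comparison: the paper offers no proof of this statement at all --- it is imported verbatim as \cite[Theorem 3.1]{Pis} and used as a black box in the proof of Theorem \ref{main}, so the only thing to measure your proposal against is Pisier's original argument. Your preparatory reductions are correct and do match the framework in which that argument lives: the operators $P^\epsilon$ are mutually orthogonal commuting idempotents summing to $I_Z$; the target operator equals $g'(0)$ for $g(z)=\prod_k\bigl(zI_Z+(1-z)P_k\bigr)$; the identity $e^{-tA}=g(e^{-t})$ with $A=\sum_k(I_Z-P_k)$ exhibits a contraction semigroup, each factor being a convex combination of $I_Z$ and $P_k$; and $e^{t}Ae^{-tA}=g'(e^{-t})\to g'(0)$, so the theorem is indeed equivalent to the dimension-free analyticity estimate $\sup_{t\geq 0}\norm{e^{t}Ae^{-tA}}\leq C(K_Z)$.

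However, that estimate is not merely the ``hardest step'' of the theorem; it \emph{is} the theorem, and your proposal does not prove it --- you say so yourself, and deferring to ``Pisier's device'' is circular, since that device is precisely what is to be established. Moreover, the one concrete plan you sketch for it contains an unresolved obstruction. The Walsh representation whose level-one coefficients are the $P^\epsilon$ with $|\epsilon|=1$ is $U(\varepsilon)=\prod_k\bigl(P_k+\varepsilon_k(I_Z-P_k)\bigr)$, and its factors at $\varepsilon_k=-1$ are the reflections $2P_k-I_Z$, which need not be contractions (their norms can be as large as $3$); hence $\norm{U(\varepsilon)}$ can grow exponentially in $n$, and applying $Q\overline{\otimes} I_Z$ to $\varepsilon\mapsto U(\varepsilon)x$ yields nothing dimension-free. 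The representation of $e^{-tA}$ that genuinely consists of contractions --- the average of $\prod_k V_k$ over independent $V_k\in\{I_Z,P_k\}$ with $\Pr[V_k=I_Z]=e^{-t}$ --- lives over a \emph{biased} product measure, whose first-chaos projection is not the Rademacher projection and carries constants that blow up with the bias, i.e.\ with $t$. Closing exactly this gap is the content of Pisier's proof: using the multiplicativity $g(zw)=g(z)g(w)$ (a consequence of your idempotent decomposition) together with the boundedness of $Q\overline{\otimes}I_Z$, he upgrades the trivial bound on $[0,1]$ to a uniform bound for $g$ on a fixed complex neighborhood of $(0,1]$, independent of $n$ and of the family of projections, and only then does a Cauchy estimate produce $\norm{g'(0)}\leq C(K_Z)$. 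None of that machinery, nor any substitute for it, appears in your proposal; what you have is a correct reformulation together with an accurate diagnosis of why the naive approaches fail.
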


For any integer $n\geq 2$, the Haar measure on $G^n$ can be defined as 
the $n$-fold product of the Haar measure on $G$. Then 
for any $1\leq p<\infty$, we regard the 
$n$-fold tensor product $L^p(G)\otimes\cdots\otimes L^p(G)$
as a subspace of $L^p(G^n)$ is the usual way. We recall that
$$
L^p(G)\otimes\cdots\otimes L^p(G)\subset L^p(G^n)
$$
is a dense subspace.

\begin{lemma}\label{lem}
Let $1<p<\infty$.
Let $n\geq 2$ and $m\geq 1$ be two integers and let 
$\bigl\{\nu_{ij}\bigr\}_{1\leq i\leq n,\, 
1\leq j\leq m}$ be a family in $M(G)$. For
any $i,j$ as above, let 
$$
S_{ij} = C_{\nu_{ij},p}
= \nu_{ij} \ast \,\cdotp\colon L^p(G)\longrightarrow L^p(G)
$$
be the corresponding convolution operator. Then 
\begin{equation}\label{estimate}
\Bignorm{\sum_{j=1}^m S_{nj}\cdots S_{1j}\overline{\otimes} I_X}_{B(L^p(G;X))}
\,\leq\,\Bignorm{\sum_{j=1}^m S_{nj}\overline{\otimes} 
\cdots \overline{\otimes} S_{1j}\overline{\otimes} I_X}_{B(L^p(G^n;X))}.
\end{equation}
\end{lemma}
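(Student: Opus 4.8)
The plan is to exploit the fact that the product operator $R:=\sum_{j=1}^m S_{nj}\cdots S_{1j}$ on $L^p(G)$ is nothing but convolution by $\sum_j \nu_{nj}\ast\cdots\ast\nu_{1j}$, and that this is exactly what the tensor operator $T:=\sum_j S_{nj}\overline{\otimes}\cdots\overline{\otimes} S_{1j}$ on $L^p(G^n)$ induces along the addition map $q\colon G^n\to G$, $q(s)=s_1+\cdots+s_n$. Writing $q^*g=g\circ q$, a direct computation with Fubini and the definition of convolution of measures yields the exact intertwining $T(q^*g)=q^*(Rg)$, which survives verbatim after tensoring with $I_X$ and is insensitive to which coordinate each $S_{ij}$ is assigned to (addition is symmetric and $M(G)$ is commutative). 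Since $q^*g$ is constant along the fibres of $q$ and hence never lies in $L^p(G^n)$, the core of the argument is to convert this exact intertwining into the norm bound (\ref{estimate}) by a Coifman--Weiss/F\o lner transference, in close parallel to the proof of Proposition \ref{Transfer}.

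First I would reduce to the case where every $\nu_{ij}$ is supported in one common compact set $K\subseteq G$, by the same total-variation approximation used in Proposition \ref{Transfer}: both sides of (\ref{estimate}) depend continuously, for the operator norm, on the $\nu_{ij}$. Then the relevant product measures are supported in the compact set $\Lambda:=\underbrace{K+\cdots+K}_{n}$. Fix $g\in L^p(G;X)$ and windows $V_1,\dots,V_n\subseteq G$ of finite positive measure, and put
$$
F(s)=\chi_W(s)\,g(q(s)),\qquad W=\bigl\{s\in G^n:\ s_i\in V_i\ (i<n),\ q(s)\in V_n\bigr\}.
$$
The key device here is the shear map $(s_1,\dots,s_n)\mapsto(s_1,\dots,s_{n-1},q(s))$, which preserves Haar measure on $G^n$ (integrate one coordinate at a time) and turns $W$ into the box $V_1\times\cdots\times V_n$; this makes the norm of $F$ \emph{exact}:
$$
\norm{F}_p^p=\Bigl(\prod_{i<n}\vert V_i\vert\Bigr)\int_{V_n}\norm{g(u)}^p\,du\ \leq\ \Bigl(\prod_{i<n}\vert V_i\vert\Bigr)\norm{g}_p^p.
$$

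Next I would pass to the eroded window $W^-=\{s:\ s-t\in W\ \text{for all}\ t\in K^n\}$. There the compact support of the measures renders the cutoff $\chi_W$ invisible, so the intertwining gives $(T\overline{\otimes} I_X)F=q^*(Rg)$ pointwise on $W^-$; and in shear coordinates $W^-$ is precisely the box $(V_1\ominus K)\times\cdots\times(V_{n-1}\ominus K)\times(V_n\ominus\Lambda)$, where $V\ominus B=\{s:\ s-B\subseteq V\}$. Combining $\norm{(T\overline{\otimes} I_X)F}_p\le\norm{T\overline{\otimes} I_X}\,\norm{F}_p$ with the lower bound coming from integrating over $W^-$, and dividing by $\prod_{i<n}\vert V_i\vert$, I obtain
$$
\Bigl(\prod_{i<n}\tfrac{\vert V_i\ominus K\vert}{\vert V_i\vert}\Bigr)\int_{V_n\ominus\Lambda}\norm{(Rg)(u)}^p\,du\ \leq\ \norm{T\overline{\otimes} I_X}^p\int_{V_n}\norm{g(u)}^p\,du.
$$

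Finally, since $G$ is amenable, I would choose $V_1,\dots,V_{n-1}$ along a F\o lner net so that each ratio $\vert V_i\ominus K\vert/\vert V_i\vert\to1$ (exactly as in Proposition \ref{Transfer}), which leaves $\int_{V_n\ominus\Lambda}\norm{Rg}^p\le\norm{T\overline{\otimes} I_X}^p\norm{g}_p^p$ for each fixed $V_n$; then letting $V_n$ increase to $G$ along an exhaustion, so that $V_n\ominus\Lambda\nearrow G$ and $\int_{V_n\ominus\Lambda}\norm{Rg}^p\to\norm{Rg}_p^p$ by monotone convergence, I get $\norm{(R\overline{\otimes} I_X)g}_p\le\norm{T\overline{\otimes} I_X}\,\norm{g}_p$ for all $g$, which is (\ref{estimate}). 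The main obstacle is the transference step itself: one must localize $q^*g$ to an honest $L^p(G^n)$ function with controlled norm while keeping the intertwining valid on a set that exhausts $G$. The shear change of variables is what resolves this, making $\norm{F}_p$ exact and reducing everything to the two standard F\o lner limits, so the only genuinely technical points left are the measure-preservation of the shear and the erosion computation of $W^-$.
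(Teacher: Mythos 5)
Your proof is correct and is essentially the paper's own argument: the same compact-support reduction, the same kind of test function $(\text{cutoff})\cdot f(s_1+\cdots+s_n)$ realizing the intertwining of $\sum_j S_{nj}\cdots S_{1j}$ with $\sum_j S_{nj}\overline{\otimes}\cdots\overline{\otimes} S_{1j}$ along the addition map, and the same F\o lner limit. The only differences are bookkeeping: the paper leaves the coordinate $s_1$ unrestricted, so translation invariance in $s_1$ yields the full $L^p$-norm of $\sum_j S_{nj}\cdots S_{1j}f$ at once (with cutoffs $\chi_{V-K}$ on $s_2,\ldots,s_n$ and integration over $V^{n-1}$), which avoids your erosion sets and the final exhaustion $V_n\nearrow G$; your variant is fine, the eroded F\o lner ratio $\vert V\ominus K\vert/\vert V\vert\to 1$ being obtainable from the standard one by taking $V=U-K$ for a F\o lner set $U$.
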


\begin{proof}
In the first part of the proof, we assume that 
all the measures $\nu_{ij}$ have a 
compact support. Taking their union, we obtain 
a compact set $K\subset G$ such that 
$\nu_{ij}$ has support included in $K$ for any $1\leq i\leq n,\, 
1\leq j\leq m$. Let $V\subset G$ be an arbitrary open subset 
with $0<\vert V\vert<\infty$. We set $W=V-K$ for convenience.

We fix some $f\in L^p(G;X)$ and introduce $F\colon G^n\to X$ defined by
$$
F(s_1,s_2,\ldots,s_n) = \chi_W(s_2)\cdots\chi_W(s_n) f(s_1+\cdots+s_n).
$$
It is plain that $F$ belongs to $L^p(G^n;X)$, with
$$
\norm{F}_p^p = \vert W\vert^{n-1} \norm{f}_p^p.
$$
We claim that for any $j$, for a.e. $s_1\in G$ and for a.e. $s_2,\ldots,s_n$ in $V$, we have
\begin{equation}\label{Tensorisation}
\bigl[S_{nj}\cdots S_{1j}\overline{\otimes} I_X(f)\bigr](s_1+s_2+\cdots+s_n) 
= \bigl[\bigl(S_{nj}\overline{\otimes} \cdots\overline{\otimes}  
 S_{1j}\overline{\otimes} I_X\bigr)(F)\bigr](s_1,\ldots, s_n).
\end{equation}
Indeed applying $S_{2j}\overline{\otimes} I_X,\ldots, S_{nj}\overline{\otimes} I_X$ successively 
to $S_{1j}\overline{\otimes} I_X (f)$, we have
\begin{align*}
\bigl[S_{nj}\cdots S_{2j} & S_{1j}\overline{\otimes} I_X (f)\bigr](s_1+s_2+\cdots+s_n)\\
 & =\int_{G^{n-1}}  \bigr[S_{1j}(f)\bigr](s_1+s_2+\cdots+s_n - 
t_2-\cdots - t_n) d\nu_{2j}(t_2)\cdots d\nu_{nj}(t_n)
\end{align*}
for a.e. $(s_1,s_2,\ldots,s_n)$ in $G^n$.
It follows from the fact that
$\nu_{2j},\ldots,\nu_{nj}$ have support in $K$
that whenever $s_2,\ldots,s_n$ belong to $V$, 
this is equal to
$$
\int_{G^{n-1}}  \chi_W(s_2 -t_2)\cdots \chi_W(s_n-t_n)   
\bigr[S_{1j}(f)\bigr](s_1+s_2+  \cdots+s_n - t_2-\cdots - t_n)\,
d\nu_{2j}(t_2)\cdots d\nu_{nj}(t_n),
$$
and hence to
\begin{align*}
& \int_{G^{n-1}} \chi_W(s_2 -t_2)\cdots \chi_W(s_n-t_n)
\biggl(\int_G f\Bigl(\sum_{i=1}^n (s_i-t_i)\Bigr)\,
d\nu_{1j}(t_1)\,\biggr)\,
d\nu_{2j}(t_2)\cdots d\nu_{nj}(t_n)\\ 
 = & \int_{G^n} \chi_W(s_2 -t_2)\cdots \chi_W(s_n-t_n)
f\Bigl(\sum_{i=1}^n (s_i-t_i)\Bigr)\,d\nu_{1j}(t_1)
d\nu_{2j}(t_2)\cdots d\nu_{nj}(t_n)\\ 
 = & \int_{G^n} F(s_1-t_1,s_2-t_2,\ldots, s_n-t_n)\,d\nu_{1j}(t_1)
d\nu_{2j}(t_2)\cdots d\nu_{nj}(t_n)\\ 
 = &\bigl[\bigl(S_{nj}\overline{\otimes} \cdots\overline{\otimes}  S_{1j}
\overline{\otimes} I_X\bigr)(F)\bigr](s_1,\ldots, s_n)
\end{align*}
as claimed.

To derive the estimate (\ref{estimate}) from this identity, we note that
by translation invariance, 
$$
\Bignorm{\sum_j S_{nj}\cdots S_{1j}\overline{\otimes} I_X(f)}_p^p =\int_G
\Bignorm{\sum_j \bigl[S_{nj}\cdots S_{1j}\overline{\otimes} I_X(f)
\bigr](s_1 +z)}^p\, d s_1\,.
$$
for any $z\in G$. Hence integrating over $V^{n-1}$,
\begin{align*}
\vert V\vert^{n-1} &
\Bignorm{\sum_j S_{nj}\cdots S_{1j} \overline{\otimes} I_X (f)}_p^p 
\\ & = \int_{V^{n-1}} \int_G
\Bignorm{\sum_j \bigl[S_{nj}\cdots  S_{1j}\overline{\otimes} 
I_X(f)\bigr](s_1+s_2+\cdots+s_n)}^p\, 
ds_1\,\cdotp\,ds_2\cdots ds_n\,.
\end{align*}
According to (\ref{Tensorisation}), this implies that 
\begin{align*}
\vert V\vert^{n-1} 
& \Bignorm{\sum_j S_{nj}\cdots S_{1j} \overline{\otimes} I_X(f)}_p^p \\ 
& =\int_{V^{n-1}}\int_{G} 
\Bignorm{\Bigl[\Bigl(\sum_j S_{nj}\overline{\otimes}\cdots\overline{\otimes} S_{1j}\overline{\otimes} I_X
\Bigr)(F)\Bigr](s_1,\ldots, s_n)}^p\,
ds_1\,\cdotp\,ds_2\cdots ds_n\\
& \leq \Bignorm{\sum_j
S_{nj}\overline{\otimes}\cdots\overline{\otimes} 
S_{1j}\overline{\otimes} I_X}^p\,\norm{F}^p\\
& \leq \Bignorm{\sum_j
S_{nj}\overline{\otimes}\cdots\overline{\otimes} 
S_{1j}\overline{\otimes} I_X}^p\,\vert V-K\vert^{n-1} \norm{f}^p.
\end{align*}
This shows that 
$$
\Bignorm{\sum_j S_{nj}\cdots S_{1j} \overline{\otimes} I_X}
\leq\biggl(\frac{\vert V-K\vert}{\vert V\vert}\biggr)^{\frac{n-1}{p}}\,
\Bignorm{\sum_j
S_{nj}\overline{\otimes}\cdots\overline{\otimes} S_{1j}\overline{\otimes} I_X}.
$$
As indicated in the proof of Proposition \ref{Transfer}, we can choose $V$ such that 
$\frac{\vert V-K\vert}{\vert  V\vert}$ is arbitrary close to 1.
Consequently the above estimate implies (\ref{estimate}) under the assumption that 
all $\nu_{ij}$ have compact support.

The argument at the end of Proposition \ref{Transfer} can be easily adapted to 
deduce the general result from this special case. Details are left to the reader.
\end{proof}

\begin{proof}[Proof of Theorem \ref{main}]
Let $\pi$ be as in part (2) and let $S=S(\pi,\nu)\in B(X)$. 
It follows from (\ref{Pol-nu}) and Proposition \ref{Transfer} 
that for any integer $n\geq 1$,
$$
\bignorm{S^{n} -S^{n-1}}\leq \norm{\pi}^2
\bignorm{(C_{\nu,2}^X)^{n} - (C_{\nu,2}^X)^{n-1}}.
$$
Hence part (2) is a consequence of part (1). 
We now aim at proving part (1).

It is plain that $C_{\nu,p}^X$ is a contraction, hence a power bounded
operator (this does not require the $K$-convexity assumption).

We fix $1<p<\infty$ and set $T=C_{\nu,p}\colon L^p(G)\to L^p(G)$.
By assumption, there exists a symmetric probability
measure $\eta\in M(G)$ such that 
$T$ is the square of $C_{\eta,p}$.

For any $1\leq q\leq \infty$, $C_{\eta,q}=\nu\ast\,\cdotp\,$ is a positive contraction 
on $L^q(G)$. Since $\eta$ is symmetric, the operator
$C_{\eta,2}$ is selfadjoint on the Hilbert space $L^2(G)$. 
Further $C_{\eta,\infty}(1) = 1$, because $\eta$ is a probability measure. 
It therefore follows from 
Rota's dilation Theorem (see e.g. \cite[p. 106]{S}) that there exist a measure 
space $(\Omega,\mu)$, two positive contractions 
$J\colon L^p(G)\to L^p(\Omega)$ and $Q\colon L^p(\Omega)\to L^p(G)$, as well
as a conditional expectation $E\colon L^p(\Omega)\to L^p(\Omega)$ such that
\begin{equation}\label{Rota}
T = Q E J\qquad\hbox{and}\qquad I_{L^p(G)}= QJ.
\end{equation}
In the sequel we will make essential use of the fact that $E$ 
is a positive contraction and $E^2=E$.

Let $n\geq 1$ be an integer. We may write
$$
- n(T^n-T^{n-1}) = nT^{n-1}(I_{L^p} -T)
=\sum_{j=1}^n T^{n-j}(I_{L^p} -T)T^{j-1}.
$$
We apply Lemma \ref{lem} with $m=n$ and the family
$\{S_{ij}\}_{1\leq i,j\leq n}$ of convolution
operators on $L^p(G)$ defined by
$$
S_{jj} =I_{L^p} -T\qquad\hbox{and}\qquad
S_{ij}=T \quad\hbox{if}\ i\not=j.
$$
This yields
$$
n\bignorm{(T^n-T^{n-1})\overline{\otimes} 
 I_X}_{B(L^p(G;X))}\leq\qquad\qquad\qquad\qquad\qquad\qquad\qquad
$$
\begin{equation}\label{Tn}
\qquad\qquad\qquad
\Bignorm{\sum_{j=1}^n \underbrace{T\overline{\otimes}
\cdots \overline{\otimes}T}_{n-j}
\overline{\otimes}(I_{L^p} -T)\overline{\otimes} 
\underbrace{T\overline{\otimes}\cdots 
\overline{\otimes} T}_{j-1}\overline{\otimes} I_X}_{B(L^p(G^n;X))}.
\end{equation}
For convenience we let 
$$
Y= L^p(\Omega)\qquad\hbox{and}\qquad
Z_n = L^p(\Omega^n;X).
$$
According to (\ref{Rota}), we have
\begin{align*}
\underbrace{T \otimes
\cdots \otimes T}_{n-j} &
 \otimes (I_{L^p} -T) \otimes
\underbrace{T \otimes \cdots 
 \otimes T}_{j-1} = \\ &
\Bigl(\underbrace{Q\otimes\cdots\otimes Q}_{n}\Bigr)
\Bigl(\underbrace{E \otimes
\cdots \otimes E}_{n-j}
\otimes(I_{Y} -E)\otimes
\underbrace{E\otimes\cdots 
\otimes E}_{j-1}\Bigr)
\Bigl(\underbrace{J\otimes\cdots\otimes J}_{n}\Bigr)
\end{align*}
for any $j=1,\ldots,n$.
Thus the operator in (\ref{Tn}) is the composition
of the following three operators:

\begin{itemize}
\item [(i)] the operator $J\overline{\otimes}\cdots\overline{\otimes} J \overline{\otimes} I_X$,
which is a well-defined contraction from $L^p(G^n;X)$ into $L^p(\Omega^n;X)$, because $J$ is 
a positive contraction;
\item [(ii)] the operator
$$
\sum_{j=1}^n 
\underbrace{E \overline{\otimes}
\cdots \overline{\otimes}E}_{n-j}
\overline{\otimes}(I_{Y} -E)\overline{\otimes} 
\underbrace{E\overline{\otimes}\cdots 
\overline{\otimes} E}_{j-1}\overline{\otimes} I_X\colon Z_n\longrightarrow Z_n,
$$
which is well-defined because $E$ is positive.
\item [(iii)] the operator $Q\overline{\otimes}\cdots\overline{\otimes} Q \overline{\otimes} I_X$,
which is a well-defined contraction from  $L^p(\Omega^n;X)$ into $L^p(G^n;X)$,
because $Q$ is a positive contraction.
\end{itemize}

We deduce that
$$
n\bignorm{(T^n-T^{n-1})\overline{\otimes} I_X}_{B(L^p(G;X))}\leq
\Bignorm{\sum_{j=1}^n 
\underbrace{E \overline{\otimes}
\cdots \overline{\otimes}E}_{n-j}
\overline{\otimes}(I_{Y} -E)\overline{\otimes} 
\underbrace{E\overline{\otimes}\cdots 
\overline{\otimes} E}_{j-1}\overline{\otimes} I_X}_{B(Z_n)}.
$$
Now for any $j=1,\ldots, n$, define $P_j\colon Z_n\to Z_n$ by
$$
P_j = \underbrace{I_{Y} \overline{\otimes}
\cdots \overline{\otimes}I_{Y} }_{n-j}
\overline{\otimes}E\overline{\otimes} 
\underbrace{I_{Y} \overline{\otimes}\cdots 
\overline{\otimes} I_{Y} }_{j-1}\overline{\otimes} I_X.
$$ 
Then the operator in the right-hand side
of the above inequality is equal to
\begin{equation}\label{Final}
\sum_{k=1}^n(I_{Z_n} - P_k)\prod_{1\leq j\not=k\leq n} P_j.
\end{equation}
Since $E$ is a positive contraction, each
$P_j$ is a contraction. Moreover the $P_j$ 
mutually commute. Further $Z_n$ is $K$-convex 
and $\sup_{n\geq 1}K_{Z_n}\, <\infty$ by Lemma
\ref{Unif}.
It therefore follows from Theorem \ref{Pisier} that the operators in (\ref{Final})
are uniformly bounded. Consequently,
$$
\sup_{n\geq 1}n\,\bignorm{(T^n-T^{n-1})\overline{\otimes}I_X}_{B(L^p(G;X))}\, <\infty.
$$
According to (\ref{Tensor}), this proves 
that $C_{\nu,p}^X = T\overline{\otimes}I_X$ is a Ritt operator.
\end{proof}

We will show in Proposition \ref{density} below
that if a symmetric probability $\nu$ has a density 
we do not need to assume that $\nu$ is a square in the statement
of Theorem \ref{main}.

We identify any $h\in L^1(G)$ with the measure $\nu$
with density $h$ and use the notations
$\C_{h,p}, C_{h,p}^X, S(\pi,h)$ instead of  
$\C_{\nu,p}, C_{\nu,p}^X, S(\pi,\nu)$. With this
convention, $h$ is a probability when $h\in L^1(h)_+$
and $\norm{h}_1=1$ and $h$ is symmetric when $h(t)=h(-t)$
for a.e. $t\in G$. Note that a symmetric probability
$h\in L^1(G)$ has BAR if and only if
there exists $a\in(-1,1]$ such that
$$
\forall\, \xi\in \widehat{G},\qquad
a\leq \widehat{h}(\xi)\leq 1.
$$

A classical result asserts that for any $h\in L^1(G)$, 
we have $\sigma(C_{h,p}) =\sigma(C_{h,2})$ for any 
$1\leq p<\infty$ (see e.g. \cite{Z}).
This implies that 
\begin{equation}\label{spectrum}
\sigma(C_{h,p}^X)
=\sigma(C_{h,2})
\end{equation}
for any $1\leq p<\infty$ and any Banach space $X$. Indeed let $\lambda\in\Cdb\setminus
\sigma(C_{h,1})$. The operator $\lambda I_{L^1} - C_{h,1}$ is a Fourier multiplier hence
its inverse is a Fourier multiplier on $L^1(G)$. 
Consequently there exists $\rho\in M(G)$ such that
$(\lambda I_{L^1} - C_{h,1})^{-1} = C_{\rho,1}=\rho\ast\,\cdotp\colon L^1(G)\to L^1(G)$.
This implies that $\lambda I_{L^p(X)} - C_{h,p}^X$ is invertible, with inverse equal
to $C_{\rho,p}^X$. Hence $\sigma(C_{h,p}^X)$ is included in
$\sigma(C_{h,1})=\sigma(C_{h,2})$. The reverse inclusion is clear.

\begin{proposition}\label{density}
Let $h\in L^1(G)_+$ with $\norm{h}_1=1$. Assume that
$h$ is symmetric. If $h$ has BAR and $X$ is a $K$-convex Banach space,
then $C_{h,p}^X$ is a Ritt operator for any $1<p<\infty$. Furthermore
for any bounded strongly continuous representation $\pi\colon G\to B(X)$,
$S(\pi,h)$ is a Ritt operator.
\end{proposition}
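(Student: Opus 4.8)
The plan is to reduce Proposition \ref{density} to Theorem \ref{main} by passing to the square. Fix $1<p<\infty$ and set $T=C_{h,p}^X$. Since $h\in L^1(G)_+$ with $\norm{h}_1=1$, the operator $C_{h,p}$ is a positive contraction on $L^p(G)$, so $T=C_{h,p}\overline{\otimes}I_X$ is a (positive) contraction on $L^p(G;X)$; in particular $T$ is power bounded and $\sigma(T)\subset\overline{\Ddb}$. The crucial observation is that the measure $h*h$ is a symmetric probability measure which is a square, namely the square of the symmetric probability $h\,dt$. Hence Theorem \ref{main}(1) applies to it and shows that $C_{h*h,p}^X=T^2$ is a Ritt operator. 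Moreover, since $h$ is symmetric with BAR, the discussion preceding the statement gives $a\in(-1,1]$ with $a\le\widehat{h}\le 1$, and by (\ref{spectrum}) together with the computation of $\sigma(C_{h,2})$ in Lemma \ref{Elem} we get $\sigma(T)=\sigma(C_{h,2})\subset[a,1]$. Thus $T$ is a power-bounded operator whose square is Ritt and whose spectrum is a real interval bounded away from $-1$; it remains to verify the resolvent estimate (\ref{Res}) for $T$.

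The engine is a resolvent factorization. For $z\in\Cdb\setminus\overline{\Ddb}$ we have $|z|>1$, whence $z\notin[a,1]$, $-z\notin[a,1]\supset\sigma(T)$, and $z^2\notin\overline{\Ddb}\supset\sigma(T^2)$; therefore $z-T$, $z+T$ and $z^2-T^2$ are all invertible, and since $z-T$ and $z+T$ commute,
\begin{equation*}
(z-T)^{-1}=(z+T)(z^2-T^2)^{-1}.
\end{equation*}
Writing $K'$ for the Ritt constant of $T^2$ from (\ref{Res}) and using $|z^2-1|=|z-1|\,|z+1|$, this yields
\begin{equation*}
\bignorm{(z-T)^{-1}}\le(|z|+1)\,\frac{K'}{|z^2-1|}=\frac{K'\,(|z|+1)}{|z-1|\,|z+1|}.
\end{equation*}

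Next I would split $\Cdb\setminus\overline{\Ddb}$ according to the distance to $-1$, which is exactly where BAR enters. Fix $\rho=\tfrac12(1+a)\in(0,1)$. On the region $|z+1|\ge\rho$, an elementary estimate bounds $(|z|+1)/|z+1|$ by a constant $C_\rho$ depending only on $\rho$ (using $|z+1|\ge|z|-1$ for large $|z|$ and $|z|+1\le3$ for $1<|z|\le2$), so the displayed bound becomes $\norm{(z-T)^{-1}}\le K'C_\rho/|z-1|$. On the complementary region $|z+1|<\rho$, the compact disc $\{|z+1|\le\rho\}$ is disjoint from $[a,1]\supset\sigma(T)$ by the choice of $\rho$, so $(z-T)^{-1}$ is uniformly bounded there by some $M$; since $|z-1|\le\rho+2<3$ on this set, we get $\norm{(z-T)^{-1}}\le M\le 3M/|z-1|$. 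Taking $K=\max(K'C_\rho,3M)$ gives (\ref{Res}) on all of $\Cdb\setminus\overline{\Ddb}$, proving that $C_{h,p}^X$ is a Ritt operator for every $1<p<\infty$.

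Finally, for the representation statement I would argue exactly as in the proof of Theorem \ref{main}(2): by (\ref{Pol-nu}) and Proposition \ref{Transfer}, $S(\pi,h)$ is power bounded and
\begin{equation*}
n\,\bignorm{S(\pi,h)^n-S(\pi,h)^{n-1}}\le\norm{\pi}^2\,n\,\bignorm{(C_{h,2}^X)^n-(C_{h,2}^X)^{n-1}},
\end{equation*}
which is bounded since $C_{h,2}^X$ is Ritt by the first part; hence $S(\pi,h)$ is a Ritt operator. I expect the main obstacle to be the behaviour of the resolvent near $z=-1$: the naive bound from the factorization blows up there, and it is precisely the BAR hypothesis (forcing $\sigma(T)\subset[a,1]$ with $a>-1$) that keeps the spectrum away from $-1$ and makes the split argument go through. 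Without it, $T$ need not be Ritt even when $T^2$ is, as the scalar model $T=-I$ already shows.
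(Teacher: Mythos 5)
Your proof is correct, and its overall strategy coincides with the paper's: pass to $h\ast h$, which is the square of a symmetric probability measure, apply Theorem \ref{main} to conclude that $T^2=C_{h\ast h,p}^X$ is Ritt, and use the BAR hypothesis together with (\ref{spectrum}) to keep $-1$ away from $\sigma(T)$. Where you diverge is in the final deduction that $T$ is Ritt. You verify the resolvent estimate (\ref{Res}) via the factorization $(z-T)^{-1}=(z+T)(z^2-T^2)^{-1}$ and a case split near $z=-1$; this works (the compact disc $\{\vert z+1\vert\le\rho\}$ with $\rho=\tfrac12(1+a)$ is disjoint from $[a,1]\supset\sigma(T)$, so the resolvent is uniformly bounded there), but it requires invoking the equivalence between the Ritt property and the resolvent condition, and the slightly stronger spectral information $\sigma(T)\subset[a,1]$. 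The paper instead stays entirely with the power-difference definition (\ref{C1}): it only needs $I+T$ invertible (i.e.\ $-1\notin\sigma(C_{h,2})$, which is what BAR gives via Lemma \ref{Elem}), writes $T^{2n}-T^{2(n-1)}=T^{2(n-1)}(T-I)(T+I)$, multiplies by $(I+T)^{-1}$ to see that $\bigl(nT^{2(n-1)}(T-I)\bigr)_n$ is bounded, and then interpolates the odd powers using power boundedness. That route is shorter and avoids the resolvent characterization altogether; your route buys nothing extra here, but it is sound, and your closing remark that BAR is exactly what prevents the failure mode $T=-I$ is the right diagnosis in both versions.
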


\begin{proof}
As in the proof of Theorem \ref{main} it suffices to
prove the first assertion.

We let $T=C_{h,p}^X$. By the BAR assumption and
Lemma \ref{Elem}, $C_{h, 2}$ is a Ritt operator
hence $-1\notin\sigma(C_{h, 2})$. Applying (\ref{spectrum})
we deduce that $I+T$ is invertible.

By Theorem \ref{main}, $T^2$ is a Ritt operator. Hence
the sequence $\bigl(n(T^{2n}- T^{2(n-1)})\bigr)_{n\geq 1}$
is bounded. Writing
$$
T^{2n}- T^{2(n-1)} = T^{2(n-1)}(T^2 -I) = T^{2(n-1)}(T -I)(T+I),
$$
we deduce that the sequence $\bigl(n T^{2(n-1)}(T-I)\bigr)_{n\geq 1}$
is bounded. This immediately implies that 
the sequence $\bigl(n T^{n}(T-I)\bigr)_{n\geq 1}$ itself is bounded, 
that is, $T$ is a Ritt operator.
\end{proof}

Applying the above result in the case $G=\Zdb$, we 
derive the following.

\begin{corollary} 
Let $(c_k)_{k\in \footnotesize{\Zdb}}$ be a sequence on 
nonnegative real numbers such that
$$
\forall\,\theta\in\Rdb,\qquad
0\leq \sum_{k=-\infty}^\infty c_k e^{ik\theta}\,\leq 1.
$$
Let $X$ be a $K$-convex Banach 
space and let $U\colon X\to X$ be an invertible operator
such that $\sup_{k\in\footnotesize{\Zdb}}\norm{U^k}\,<\infty.$
Then 
$$
S=\sum_{k=-\infty}^\infty c_k U^k
$$
is a Ritt operator.
\end{corollary}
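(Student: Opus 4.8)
The plan is to recognize this corollary as the specialization of Proposition \ref{density} to $G=\Zdb$, after disposing of a degenerate case. First I would set $\nu=(c_k)_{k\in\Zdb}$ and, since $\Zdb$ carries counting measure as its Haar measure, identify $\nu$ with its density $h=(c_k)\in L^1(\Zdb)_+$. The map $\pi(k)=U^k$ is a bounded (by hypothesis $\sup_k\norm{U^k}<\infty$) strongly continuous representation of $\Zdb$ (strong continuity being automatic since $\Zdb$ is discrete), and $S=S(\pi,\nu)=S(\pi,h)$. The scalar hypothesis says that $\widehat{\nu}(\theta)=\sum_k c_k e^{ik\theta}$ is valued in $[0,1]$; in particular $\widehat{\nu}$ is real, which forces $c_k=c_{-k}$ for all $k$, so $h$ is symmetric, and evaluating at $\theta=0$ gives $s:=\sum_k c_k=\widehat{\nu}(0)\in[0,1]$.

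The main case is $s=1$. Then $h$ is a symmetric probability density with $\widehat{h}\in[0,1]$. Since $0\in(-1,1]$ and $0\leq\widehat{h}\leq 1$, the measure $h$ has BAR in the form recalled just before Proposition \ref{density}. As $X$ is $K$-convex, the second (``furthermore'') assertion of Proposition \ref{density} applies verbatim and yields that $S(\pi,h)=S$ is a Ritt operator. Note that no squareness assumption on $\nu$ is needed here: this is precisely the point of Proposition \ref{density}, and on $\Zdb$ every measure automatically has a density, so the proposition is directly available.

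It remains to treat the degenerate case $s<1$, which falls outside Proposition \ref{density} because the normalization $\norm{h}_1=1$ fails. Here $S$ is a Ritt operator for an elementary reason independent of $K$-convexity. Writing $M=\sup_{k}\norm{U^k}$ and using that the total mass of the $n$-fold convolution $\nu\ast\cdots\ast\nu$ equals $\widehat{\nu}(0)^n=s^n$, we have $S^n=\sum_k(\nu\ast\cdots\ast\nu)_k U^k$ and hence $\norm{S^n}\leq M s^n$ for every $n\geq 0$. Thus $\norm{S^n}\to 0$ and $n\norm{S^n-S^{n-1}}\leq nM(s^n+s^{n-1})\to 0$, so both defining estimates of a Ritt operator hold trivially.

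The substance of the argument is entirely borrowed from Proposition \ref{density} (and, through it, from Theorem \ref{main} and Pisier's Theorem \ref{Pisier}); the only genuine care required is to notice that the stated hypothesis allows $\sum_k c_k<1$, to separate off that case, and to verify that the scalar condition $0\leq\widehat{\nu}\leq 1$ indeed delivers both the symmetry and the BAR property needed to invoke the proposition.
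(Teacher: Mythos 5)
Your proof is correct and follows the paper's own route: the paper derives this corollary directly as the specialization of Proposition \ref{density} to $G=\Zdb$, exactly as you do, with the symmetry of $(c_k)$ and the BAR condition read off from $0\leq\widehat{\nu}\leq 1$. Your separate treatment of the sub-probability case $\sum_k c_k<1$ (which the stated hypothesis technically permits but Proposition \ref{density} does not cover) is a legitimate extra care the paper glosses over, and your elementary decay argument there is sound.
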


\section{A family of counterexamples}\label{C-Ex}

In Theorem \ref{Pitt} below we provide examples which show two facts.
First BAR does not imply that Q.1. has an affirmative answer on 
general Banach spaces, although Theorem \ref{UMD-BL} says that 
this is the case on UMD Banach lattices. Second, the $K$-convexity 
assumption in Theorem \ref{main} is optimal. 

In the sequel we let $\ell^1_n$ (resp. $\ell^\infty_n$) denote the 
$n$-dimensional $L^1$-space (resp. $L^\infty$-space).

We give some background on the so-called
regular operators, which will be used in the
proof of Theorem \ref{Pitt}. 
Let $(\Omega,\mu)$ be a measure space, let $1<p<\infty$
and let $T\colon L^p(\Omega)\to L^p(\Omega)$ be a bounded
operator. We say that $T$ is regular if there exists
a constant $C\geq 0$ such that 
$$
\Bignorm{\sup_{1\leq k\leq n}\bigl\vert T(f_k)\bigr\vert}_{L^p}
\leq C\,\Bignorm{\sup_{1\leq k\leq n}\bigl\vert f_k\bigr\vert}_{L^p}
$$
for any integer $n\geq 1$ and any $f_1,\ldots,f_n$ in $L^p(\Omega)$.
In this case, we let $\norm{T}_r$ denote the smallest $C$ for 
which this property holds; this is called the regular norm of $T$. We mention
(although we will not use it) that an operator $T$ is regular
if and only if it is a linear combination of positive bounded
operators $L^p(\Omega)\to L^p(\Omega)$.

This definition can be reformulated 
as follows: a bounded operator $T$ on $L^p(\Omega)$ is regular if and only if
the tensor extensions $T\otimes I_{\ell^\infty_n}\colon 
L^p(\Omega;\ell^\infty_n)\to L^p(\Omega;\ell^\infty_n)$
are uniformly bounded. It is not hard to deduce that $T$ is regular
if and only if $T\otimes I_X$ extends to a bounded operator
on $L^p(\Omega;X)$ for any Banach space $X$ and that 
$\norm{T\overline{\otimes} I_X\colon L^p(\Omega;X)\to L^p(\Omega;X)}\leq \norm{T}_r$
in this case. This immediately implies that 
a bounded operator
$T\colon L^p(\Omega)\to L^p(\Omega)$ is regular if and only if its adjoint
$T^*\colon L^{p'}(\Omega)\to  L^{p'}(\Omega)$ is regular, and that
\begin{equation}\label{Dual}
\norm{T^*}_r = \norm{T}_r
\end{equation}
in this case.
(Here $p'=p/(p-1)$ denotes the
conjugate of $p$.) 
We refer to \cite{Pis2, Sc} for details and complements.

We say that a Banach space $X$ contains the $\ell^1_n$ 
uniformly if there exists a constant $C\geq 1$ such that 
for any $n\geq 1$, there exist $x_1,x_2,\ldots,x_n$ in 
$X$ such that
$$
\sum_{i=1}^n\vert\alpha_i\vert\,\leq\Bignorm{
\sum_{i=1}^n \alpha_i x_i}\,\leq C\,\sum_{i=1}^n\vert\alpha_i\vert
$$
for any complex numbers $\alpha_1,\alpha_2,\ldots,\alpha_n$. This
is equivalent to the existence, for any $n\geq 1$,
of an $n$-dimensional subspace of $X$ which is $C$-isomorphic 
to $\ell^1_n$. It is shown in \cite{Pis} that a Banach space
contains the  $\ell^1_n$ uniformly if and only if it is not 
$K$-convex. This leads to the following.

\begin{lemma}\label{Reg}
Let $X$ be a Banach space and assume that $X$ is not $K$-convex.
Let $T\colon L^p(\Omega)\to L^p(\Omega)$ be a bounded operator
such that $T\otimes I_X$ extends to a bounded operator 
$$
T\overline{\otimes} I_X\colon L^p(\Omega;X)\longrightarrow L^p(\Omega;X).
$$
Then $T$ is regular.
\end{lemma}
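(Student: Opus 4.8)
The plan is to prove the contrapositive-style implication directly: assuming $X$ contains the $\ell^1_n$ uniformly (which, by Pisier's result cited just above, is equivalent to $X$ not being $K$-convex), I will show that boundedness of $T\overline{\otimes} I_X$ forces the uniform boundedness of the tensor extensions $T\otimes I_{\ell^\infty_n}$, which is precisely the reformulation of regularity recalled in the text. The natural bridge is duality. Since $X$ contains $\ell^1_n$ uniformly with some constant $C$, for each $n$ we get an $n$-dimensional subspace $E_n\subset X$ and an isomorphism $u_n\colon \ell^1_n\to E_n$ with $\norm{u_n}\norm{u_n^{-1}}\leq C$. The dual picture then supplies, via $u_n^*$, a controlled way to compare $\ell^\infty_n=(\ell^1_n)^*$ with a quotient of $X^*$, and this is what lets $\ell^\infty_n$-valued estimates be dominated by $X$-valued ones.

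Concretely, first I would fix $n$ and $f_1,\ldots,f_n\in L^p(\Omega)$ and aim to bound $\bignorm{\sup_k\vert T f_k\vert}_{L^p}$ by $\norm{T\overline{\otimes} I_X}$ times $\bignorm{\sup_k\vert f_k\vert}_{L^p}$, up to the constant $C$. Using the embedding $u_n\colon \ell^1_n\hookrightarrow X$, form the $X$-valued function $F=\sum_k f_k\otimes u_n(\delta_k)\in L^p(\Omega;X)$; the left inequality $\sum_i\vert\alpha_i\vert\leq\norm{\sum_i\alpha_i x_i}$ in the definition gives a lower bound on $\norm{F(\omega)}_X$ in terms of $\sum_k\vert f_k(\omega)\vert$, while the upper bound gives $\norm{F(\omega)}_X\leq C\sum_k\vert f_k(\omega)\vert$. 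Applying $T\overline{\otimes} I_X$ to $F$ produces $\sum_k (Tf_k)\otimes u_n(\delta_k)$, and a pointwise comparison of its $X$-norm with $\sup_k\vert Tf_k(\omega)\vert$ or $\sum_k\vert Tf_k(\omega)\vert$ should yield the desired control. The main subtlety is that the $\ell^1_n$-norm is naturally compared to the $\ell^1_n$-sum $\sum_k\vert\cdot\vert$, whereas regularity is phrased with the $\ell^\infty_n$-supremum; reconciling these requires passing to the dual side, testing $T f_k$ against functionals coming from $(E_n)^*$ so that the supremum norm on the output is recovered from the $\ell^1_n$-geometry on the input.

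I would therefore carry the duality through explicitly. By (\ref{Dual}) the regular norms of $T$ and $T^*$ coincide, so it is equivalent to prove that $T^*\colon L^{p'}(\Omega)\to L^{p'}(\Omega)$ is regular, i.e. that $T^*\otimes I_{\ell^\infty_n}$ are uniformly bounded; dualizing once more, regularity of $T$ amounts to uniform boundedness of $T\otimes I_{\ell^1_n}$ on $L^p(\Omega;\ell^1_n)$. This last formulation is the clean target: I now only need to bound $T\otimes I_{\ell^1_n}$ uniformly. The uniform containment of $\ell^1_n$ furnishes, for each $n$, a $C$-isomorphic copy of $\ell^1_n$ inside $X$, so $L^p(\Omega;\ell^1_n)$ embeds $C$-isomorphically (with inverse norm at most $C$) into $L^p(\Omega;X)$ in a way that intertwines $T\otimes I_{\ell^1_n}$ with the restriction of $T\overline{\otimes} I_X$. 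Hence $\norm{T\otimes I_{\ell^1_n}}\leq C^2\norm{T\overline{\otimes} I_X}$ uniformly in $n$, which is exactly regularity with $\norm{T}_r\leq C^2\norm{T\overline{\otimes} I_X}$.

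The hard part, or rather the point demanding care, will be the intertwining step: one must check that the chosen isomorphic embedding of $\ell^1_n$ into $X$ genuinely conjugates the scalar operator $T$ acting coordinatewise on $\ell^1_n$-valued functions to $T\overline{\otimes} I_X$ acting on the corresponding $X$-valued functions, so that no extra constant beyond the isomorphism constants of $u_n$ and $u_n^{-1}$ creeps in. Since $T$ acts only on the $L^p(\Omega)$ factor and the embedding $u_n$ touches only the vector-valued factor, the two operations commute, and the estimate follows; I would spell out this commutation to make the constant tracking transparent, but it is essentially formal once the reformulations of regularity recalled in the excerpt are in place.
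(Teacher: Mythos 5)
Your proposal is correct and, in its final form, follows essentially the same route as the paper: embed $\ell^1_n$ uniformly into $X$ to get uniform bounds on $T\otimes I_{\ell^1_n}$, pass by duality ($L^p(\Omega;\ell^1_n)^*=L^{p'}(\Omega;\ell^\infty_n)$) to uniform bounds on $T^*\otimes I_{\ell^\infty_n}$, conclude that $T^*$ is regular, and invoke (\ref{Dual}). The direct attempt sketched in your second paragraph is correctly abandoned for the duality argument, which is exactly the paper's proof.
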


\begin{proof}
Since $X$ is not $K$-convex, there exist a constant $C\geq 1$ such that 
for any $n\geq 1$, $X$ contains an $n$-dimensional subspace $C$-isomorphic
to $\ell^1_n$. As a consequence of the tensor extension assumption,
we obtain that for any $n\geq 1$,
$$
\bignorm{T\otimes I_{\ell^1_n}\colon 
L^p(\Omega;\ell^1_n)\longrightarrow L^p(\Omega;\ell^1_n)}
\leq C\norm{T\overline{\otimes} X}.
$$
Since $L^p(\Omega;\ell^1_n)^* = L^{p'}(\Omega;\ell^\infty_n)$ 
isometrically, this implies that 
for any $n\geq 1$,
$$
\bignorm{T^*\otimes I_{\ell^\infty_n}\colon 
L^{p'}(\Omega;\ell^\infty_n)\longrightarrow  L^{p'}
(\Omega;\ell^\infty_n)}\leq C\norm{T\overline{\otimes} X}.
$$
This shows that $T^*$ is regular. By (\ref{Dual}), the operator $T$ is regular as well.
\end{proof}

\begin{theorem}\label{Pitt}
Assume that $G$ is a non discrete locally compact abelian group and that
$X$ is a non $K$-convex Banach space. Then there exists a symmetric probability
measure $\eta$ on $G$ such that for any 
$1<p<\infty$, $C^{X}_{\eta^2, p}\colon L^p(G;X)
\to L^p(G;X)$ is not a Ritt operator.
\end{theorem}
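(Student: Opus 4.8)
The plan is to argue by contradiction. Assuming $C_{\eta^2,p}^X$ is a Ritt operator, I would use the non-$K$-convexity of $X$ to descend from the vector-valued Ritt estimate to a total variation estimate for the scalar convolution powers, and then construct $\eta$ so that this total variation estimate fails.

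First I would set up the reduction. Using (\ref{Tensor}) together with the semigroup identity $S(\pi,\nu_2)S(\pi,\nu_1)=S(\pi,\nu_2\ast\nu_1)$, we have $(C_{\eta^2,p}^X)^{n}=C_{\eta^{\ast 2n},p}\overline{\otimes} I_X$, so that, writing $\mu_n=\eta^{\ast 2n}-\eta^{\ast(2n-2)}\in M(G)$, the $n$-th Ritt difference of $T:=C_{\eta^2,p}^X$ is exactly $C_{\mu_n,p}\overline{\otimes} I_X$. If $T$ were Ritt, then $\sup_{n\ge1}n\norm{C_{\mu_n,p}\overline{\otimes} I_X}<\infty$. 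Since $X$ is not $K$-convex, Lemma \ref{Reg} (whose proof gives the bound with a constant $C$ depending only on the uniform $\ell^1_n$-isomorphism constant of $X$, hence independent of $n$) yields $\norm{C_{\mu_n,p}}_r\le C\,\norm{C_{\mu_n,p}\overline{\otimes} I_X}$. Finally, the regular norm of a convolution operator equals the total mass of its measure: the pointwise domination $|\mu\ast f|\le|\mu|\ast|f|$ gives $\norm{C_{\mu,p}}_r\le\norm{\mu}_{M(G)}$, and testing against approximate units gives the reverse inequality. Combining these three facts, the Ritt property of $T$ would force
\[
\sup_{n\ge1}n\,\norm{\eta^{\ast 2n}-\eta^{\ast(2n-2)}}_{M(G)}<\infty,
\]
that is, convolution by $\eta^2$ would be a Ritt operator on $L^1(G)$. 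Hence it suffices to exhibit a symmetric probability $\eta$ for which this last supremum is infinite; note the same $\eta$ will then work for every $1<p<\infty$.

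Second, I would pass to a model group. By structure theory $G$ has an open subgroup $H\cong\Rdb^a\times K$ with $K$ compact, and non-discreteness forces $a\ge1$ or $K$ infinite. An open subgroup embeds isometrically in $M(G)$ and is stable under convolution, and so does any closed subgroup of it; since total variation is preserved under these embeddings, it is enough to build $\eta$ on $\Rdb$ (restricting to $\Rdb\times\{0\}$ when $a\ge1$) or on a compact group carrying a sequence of independent elements tending to $e$, the latter being available because $\widehat{G}$ is infinite (for instance a closed subgroup that is an infinite product of finite cyclic groups).

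Third — and this is the heart of the matter — I would take $\eta$ to be a multiscale mixture of symmetric two-point measures,
\[
\eta=\sum_{j\ge1}p_j\,\tfrac12(\delta_{g_j}+\delta_{-g_j}),\qquad p_j\asymp j^{-1-\varepsilon},\quad\sum_{j}p_j=1,
\]
where the $g_j\to e$ are chosen independent at separated scales (super-lacunary reals when $G=\Rdb$; independent generators when $G$ is an infinite product of finite groups). This $\eta$ is a symmetric probability with $\widehat{\eta^2}=\widehat{\eta}^{\,2}\ge0$, so $\eta^2$ even has BAR, underscoring that $K$-convexity, not BAR, is the decisive hypothesis. The scale separation makes the random walk driven by $\eta$ decodable coordinate by coordinate, so $\eta^{\ast m}$ factorises approximately as a product over scales of the net-displacement laws $\rho_j^{(m)}$, giving $\norm{\mu_n}_{M(G)}\approx\sum_{j\ge1}\norm{\rho_j^{(2n)}-\rho_j^{(2n-2)}}_{M}$. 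A local computation shows each summand is comparable to $\min(1/n,p_j)$ — scales with $np_j\gtrsim1$ lie in the local-CLT regime and contribute $\sim1/n$, while barely-activated scales contribute $\sim p_j$ — so that $n\norm{\mu_n}_{M(G)}\gtrsim|\{j:p_j\ge1/n\}|+n\!\!\sum_{p_j<1/n}\!\!p_j\asymp n^{1/(1+\varepsilon)}\to\infty$. The main obstacle is exactly this last step: converting the heuristic factorisation into a rigorous lower bound for the total variation, which requires quantifying the scale separation so that distinct displacements $\sum_j g_j N_j$ do not collide, and controlling the residual interaction between scales. Once this lower bound is in hand, it contradicts the displayed consequence of the Ritt property, completing the proof for every $1<p<\infty$.
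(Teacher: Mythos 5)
Your opening reduction is sound and, as far as it goes, runs parallel to the paper: from the Ritt estimate for $C^{X}_{\eta^2,p}$ you pass, via non-$K$-convexity and Lemma \ref{Reg}, to a uniform regular-norm bound for the scalar convolution operators, and then identify the regular norm of a convolution operator with the total mass of its measure (this rests on Arendt's theorem \cite[Proposition 3.3]{A}, which the paper also invokes). Your target thus becomes: exhibit a symmetric probability $\eta$ with $\sup_n n\norm{\eta^{\ast 2n}-\eta^{\ast(2n-2)}}_{M(G)}=\infty$. The paper never needs such a quantitative statement. It uses the Ritt hypothesis only through its spectral consequence $-1\notin\sigma(C^{X}_{\eta^2,p})$, i.e.\ the invertibility of $I+C^{X}_{\eta^2,p}$, and chooses $\eta$ via the Wiener--Pitt phenomenon (proof of \cite[Theorem 5.3.4]{R}) so that $\tau=\delta_e+\eta^2$ is not invertible in $M(G)$ although $\widehat{\tau}\geq 1$; regularity of $(I+C_{\eta^2,p})^{-1}$ then manufactures an inverse of $\tau$ in $M(G)$, a contradiction. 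This is a far softer route than the one you propose.

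The gap in your proposal is exactly where you locate it, and it is not a technicality: the construction of $\eta$ and the lower bound for $n\norm{\mu_n}_{M(G)}$ are the entire content of the theorem once the reduction is done, and they are not carried out. Worse, the pivotal claim $\norm{\mu_n}_{M(G)}\approx\sum_j\norm{\rho_j^{(2n)}-\rho_j^{(2n-2)}}$ is false as a lower bound: even in the ideal situation where the scales decouple into a product measure, the per-scale contributions are signed and undergo square-root cancellation. In the Gaussian caricature one has
$$
\mu_n\;\approx\;\sum_j p_j\,\partial_j^2\Bigl(\prod_i\gamma_{v_i}\Bigr)
\;=\;\Bigl(\sum_j p_j v_j^{-2}(x_j^2-v_j)\Bigr)\prod_i\gamma_{v_i},
\qquad v_j\asymp np_j,
$$
whose $L^1$-norm is the first absolute moment of a sum of independent centred variables each of size $\asymp 1/n$, hence of order $\sqrt{J_n}/n$ with $J_n=\#\{j: np_j\gtrsim 1\}$, not $J_n/n$ as you assert. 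The divergence $n\norm{\mu_n}\to\infty$ plausibly survives at the slower rate $\sqrt{J_n}$, but establishing it still requires controlling the multinomial coupling between scales, the fact that the position determines the net displacements but not the visit counts $K_j$, and the adaptation to compact totally disconnected groups --- none of which is addressed. As written, the proposal proves the (comparatively easy) reduction but not the theorem.
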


\begin{proof}
Let $\delta_e$ denote the Dirac measure at point $e$ (the unit of $G$).
Since $G$ is non discrete, there exists a symmetric probability
measure $\eta$ on $G$ such that 
$$
\tau = \delta_e + \eta^2
$$
is not invertible in the Banach algebra $M(G)$. 
We refer to the proof of \cite[Theorem 5.3.4]{R}
for this fact, which is a generalization of the Wiener-Pitt
Theorem (see in particular the last three 
lines of p. 107 in the latter reference).

We fix some $1<p<\infty$.
By \cite[Proposition 5.2]{CCL} (or by Theorem \ref{UMD-BL}), 
$C_{\eta^2, p}\colon L^p(G)\to L^p(G)$
is a Ritt operator. In particular, $-1\in\sigma(C_{\eta^2, p})$.
Thus the operator 
$$
S= I_{L^p} + C_{\eta^2, p}
$$
is invertible. 

Assume now that $C^{X}_{\eta^2, p}\colon L^p(G;X) \to L^p(G;X)$ is
a Ritt operator. Then similarly, the operator $I_{L^p(X)} + C^X_{\eta^2, p}$ is invertible. 
By (\ref{Tensor}), $I_{L^p(X)} + C^X_{\eta^2, p} =S\overline{\otimes} I_X$ hence the 
invertibility of this operator implies that $S^{-1}\otimes X$ extends to a 
bounded operator
$$
S^{-1}\overline{\otimes}I_X\colon L^p(G;X)\longrightarrow L^p(G;X).
$$
Since $X$ is not $K$-convex, it follows from Lemma \ref{Reg} that 
$S^{-1}$ is regular.

The operator $S^{-1}$ is a Fourier multiplier. Hence 
according to Arendt's description
of regular Fourier multipliers \cite[Proposition 3.3]{A},
there exists a measure $\rho\in M(G)$ such that 
$S^{-1}(f) = \rho\ast f$ for any $f\in L^p(G)$. By construction,
$S(f) = \tau*f$ for any $f\in L^p(G)$. Hence we have
$\rho\ast\tau\ast f=f$ for any $f\in L^p(G)$. This yields 
$\rho\ast\tau=\delta_e$ and contradicts the fact that
$\tau$ is not invertible in the Banach algebra $M(G)$.
\end{proof}

\bigskip
\noindent
{\bf Acknowledgements.} 
The two authors were supported by the French 
``Investissements d'Avenir" program, 
project ISITE-BFC (contract ANR-15-IDEX-03).

\bigskip

\end{document}